\theoremstyle{plain}  
\newtheorem{thm}{Theorem}[section]
\newtheorem{lem}[thm]{Lemma}
\newtheorem{prp}[thm]{Proposition} 
\theoremstyle{definition}
\newtheorem{dfn}[thm]{Definition}
\newtheorem{xmp}[thm]{Example}
\newcommand\norm[1]{|\!|#1|\!|}
\newcommand\SetOf[2]{\bigl\{#1\,\bigm|\,#2\bigr\}}
\newcommand\av[2]{{\langle {\vec #1},#2\rangle}}
\newcommand\as[2]{{\langle  #1,#2\rangle}}
\newcommand\RP[1]{{\mathbb {RP}}^{#1}}
\newcommand\bs[1]{{\boldsymbol #1}}
\newcommand\ol[1]{{\overline {#1}}}
\renewcommand\vec[1]{\bs #1}
\newcommand\supp{\hbox{\rm supp}}
\newcommand\PG{\hbox{PG}}
\newcommand\co{\operatorname{co}}
\newcommand\sing{\operatorname{ss}}
\newcommand\diag{\operatorname{diag}}
\newcommand\C{{\mathbb C}}
\newcommand\F{{\mathbb F}}
\newcommand\R{{\mathbb R}}
\newcommand\T{{\mathbb T}}
\newcommand\Fn{{\mathbb F}_{(n)}}
\newcommand\Cn{{\mathbb C}_{(n)}}
\newcommand\Hn{{\cH}_{(n)}}
\newcommand\cA{{\mathcal A}}
\newcommand\cB{{\mathcal B}}
\newcommand\cD{{\mathcal D}}
\newcommand\cF{{\mathcal F}}
\newcommand\cH{{\mathcal H}}
\newcommand\cL{{\mathcal L}}
\newcommand\cS{{\mathcal S}}
\newcommand\cT{{\mathcal T}}
\newcommand\cW{{\mathcal W}}
\newcommand\pd{\partial}
\renewcommand\d{{\delta}}
\newcommand\e{{\epsilon}}
\newcommand\g{{\gamma}}
\newcommand\G{{\Gamma}}
\newcommand\om{{\omega}}
\newcommand\s{{\sigma}}
\newcommand\var{{\,\cdot\,}}
\newcommand\z{{\zeta}}
\newcommand\bA{{\bs A}}
\newcommand\bB{{\bs B}}
\newcommand\bx{\bs x}
\newcommand\by{\bs y}
\newcommand\xchi{\smash{\raise 0.5ex\hbox{$\chi$}}}
\author{Brian Jefferies}
\title{Kippenhahn Varieties and the Weyl Calculus for Several Matrices I}
 \address{School of Mathematics\\ NRISAWD\\ NSW 2484
  AUSTRALIA}
 \email{brian.jefferies@gmail.com}
\begin{document}
\baselineskip 16pt
\maketitle

\begin{abstract} 
The paper reviews properties of the Weyl functional calculus for several operators and its relation to the generalised numerical range of n hermitian matrices. The support and singular support of the Weyl functional calculus for $n$ hermitian matrices are determined by Kippenhahn varieties in algebraic geometry.
\end{abstract}

{\centering\footnotesize Dedicated to the memory of Alan McIntosh.\par}
\section{Introduction}
Let $n$ be an  integer and $\vec A =
(A_1,\dots,A_n)$ an $n$-tuple of linear operators densely defined in a complex Banach space $X$ 
whose joint spectra satisfy the spectral reality condition 
\begin{equation}\label{eq:hyp}
\s(\av A\xi)\subset\R,\qquad \xi\in\R^n,
\end{equation}
with $\av A\xi = A_1\xi_1+\dots+A_n\xi_n$. Such a system $\vec A$ is called {\it hyperbolic}.

The term comes from the case when $\bA$ consists of $N\times N$ matrices with $N= 2,3,\dots$. With the characteristic
polynomial of a matrix $B$ defined by $p_B(z) =\det(B-zI)$ for  $z\in\C$, the $n$-tuple $\bA$ is hyperbolic
exactly when the only solutions $z$ of the equation $p_{\av A\xi}(z)=0$ are real for any $\xi\in\R^n$, $\xi\ne 0$,
or that $P^\bA:\xi\longmapsto \det(\xi_0I + \av A{\bs \xi})$, $\xi\in \R^{n+1}$, is a 
homogeneous {\it hyperbolic polynomial} with respect to $e_0 =(1,0,\dots,0)\in \R^{n+1}$. Here the vector $\xi =(\xi_0,\dots,\xi_n)\in \R^{n+1}$ has been written 
as $\xi = \xi_0e_0 +\vec\xi$ with $\vec\xi=\sum_{j=1}^n\xi_je_j$ for the standard basis $(e_0,\dots,e_n)$ of $\R^{n+1}$
and the same notation is used for $\z\in \C^{n+1}$.

General hyperbolic polynomials $p:\R^{n+1}\to \R$ were studied by L. G\aa rding \cite{Gar} in relation to hyperbolic partial differential equations in order that the equation $p(\tau,\vec D) = \d_0$ be well-posed in the sense of distributions with respect to
the differential operators 
$$\tau =  \frac1i\frac{\pd}{\pd t},\qquad \vec D=\bigg(\frac1i\frac{\pd}{\pd x_1},\dots,\frac1i\frac{\pd}{\pd x_n}\bigg).$$
The unique distributional solution $F_p$ of $p(\tau,\vec D) = \d_0$ \cite[Theorem 12.5.1]{H2} may then be expressed by a formula of Herglotz-Petrovsky-Leray 
systematically studied in \cite{ABG1},\cite{ABG2} and \cite[Chapter 12]{H2}. The associated hyperbolic system
\begin{equation}\label{eq:Weyl}
I\frac{\pd }{\pd t} + \av A\nabla  = i\delta_0I
\end{equation}
then has a unique solution $F_\bA = \Phi(\bA)F_{P^\bA}$ for a matrix differential operator $\Phi(\bA)$ of order $N-1$
in $(\tau,\vec D)$
formed by multiplying the Fourier transform of $F_{P^\bA}$ by the $(N-1)$ matrix minors  of $\xi_0I+\av A{\vec\xi}$, $\xi=\xi_0e_0+\vec\xi\in\R^{n+1}$.

The Fourier transform of the Schwartz function $f\in \cS(\R^n)$ is taken to be
$$\hat f(\xi) = \int_{\R^n}e^{-i\as x\xi}f(x)\,dx,\quad \xi\in\R^n.$$
On the other hand, taking the Fourier transform of equation (\ref{eq:Weyl}) in the space variables only
for $t>0$
and solving for the initial value problem, we get the
distribution $\cW_{t\bA}$, $t\in\R$. At $t=1$, the operator valued distribution $\cW_{\bA}$ given by
\begin{equation}\label{eq:Wfc}
\as{\cW_{\bA}} f\ = \frac1{(2\pi)^n}\int_{\R^n} e^{i\av A\xi}\hat f(\xi)\,d\xi,\quad f\in \cS(\R^n),
\end{equation}
is the {\it Weyl functional calculus} for $\bA$ studied since the late 60's by E. Nelson \cite{Nel}, M. Taylor \cite{T1}
and R.F.V. Anderson \cite{A1,A2}. This expression has the advantage that it does not involve determinants
so $\cW_{\bA}$ makes sense
for any system $\bA$ of collectively densely defined linear operators which satisfy an exponential estimate of the
form 
\begin{equation}\label{eq:type}
\|e^{i\av A\xi}\| \le C(1+|\xi|)^s,\quad \xi\in\R^n,
\end{equation}
for some $C,s > 0$, for example, selfadjoint operators defined in a Hilbert space with $s=0$. 
A single bounded linear operator satisfying the estimate $(\ref{eq:type})$ is a generalised scalar operator
\cite[Theorem 5.4.5]{CF}. More precisely, if $\bA=(A_1,\dots,A_n)$ are densely defined selfadjoint operators such that every real linear
combination has a selfadjoint closure $\av A\xi$ on the intersection of the relevant domains,
then (\ref{eq:type}) follows with $C=1$ and $s=0$ from successive applications of the Lie-Kato-Trotter product formula
(see \cite[Chapter III, Corollary 5.8]{EN})

The prime candidate for the Weyl functional calculus
is the system $(\bs X,\bs D)$ of densely defined operators. Here $\bs X= (X_1,\dots, X_n)$ with $X_j$ the operator of multiplication by the variable $x_j$,
$j=1,\dots,n$, so that $\cW_{(\bs X,\bs D)}$ is a $\cL(L^2(\R^n))$-valued Schwartz distribution on $\R^{2n}$.
This is the original Weyl functional calculus and much studied as a pseudodifferental operator \cite{HW},\cite[VII \S14]{Taylor}. The more general definition of $\cW_{\bA}$ for selfadjoint
operators $\bA$ formulated by E. Nelson \cite{Nel} was inspired by ideas of R. Feynman on a general
operator calculus and ``disentangling" procedure \cite{F2},\cite{JLN}. In the case that $\bA$ are selfadjoint elements of a von Neumann
algebra with a given trace $\rho$, then the scalar distribution $\rho\circ \cW_\bA$ is the {\it Wigner transform} \cite{Case, SW}.

In a similar fashion, the Kohn-Nirenberg functional calculus $\s\mapsto\s(\vec X,\vec D)_{\text{KN}}$ defined for 
 symbols $\s\in \cS(\R^{2n})$ by
$$\s(\vec X,\vec D)_{\text{KN}} = \frac1{(2\pi)^{2n}}\int_{\R^{2n}}e^{i\av X q}e^{i\av D p}\hat\s(q,p)\,dq\,dp$$
is an important tool in the analysis of partial differential equations \cite[VII (1.10)]{Taylor}.

If  $\bA$ are bounded linear operators on a Banach space $X$ satisfying the exponential bound (\ref{eq:type}) then
by the Paley-Wiener theorem $\cW_{\bA}$ has compact support and there exist $r > 0$ such that
\begin{equation}\label{eq:type1}
\|e^{i\av A\z}\| \le C'(1+|\Re\z|)^{s'}e^{r|\Im \z|},\quad \z\in\C^n,
\end{equation}
for some $C',s'>0$. The operator valued distribution $\cW_\bA$ has support in the ball $B_r$ of radius $r$ centred at zero.
Any number $r$ strictly greater than $\|\bA\|=(\|A_1\|^2+\dots+\|A_n\|^2)^\frac12$ will do.
All such linear operators satisfy the spectral reality condition (\ref{eq:hyp}).

Back to hyperbolic $(N\times N)$ matrices $\bA$, the solution of (\ref{eq:Weyl}) has an expression
by the Herglotz-Petrovsky-Leray formula so the bound (\ref{eq:type1}) is satisfied with
$s'$ at most $N-1$. In the infinite dimensional situation this may no longer be the case.
The paper of Nelson \cite{Nel} also contains a formula  for hermitian matrices 
$\vec A$ acting on $\C^N$ expressing $\cW_\bA$ as a differential operator of order $(N-1)$
acting on the measure $\mu\circ n_{\bA}^{-1}$. Here $\mu$ is the unitarily invariant probability
measure on the unit sphere $S(\C^N)$ in $\C^N$ centred at zero and
\begin{equation}\label{eq:jnr}
n_\bA:h\longmapsto (\langle A_1h,h\rangle,\dots,\langle A_nh,h\rangle ),\qquad h\in S(\C^N),
\end{equation}
 is the {\it joint numerical range map}. The Payley-Wiener theorem shows that $\cW_\bA$ is supported
 by the convex hull of the {\it joint numerical range} $N_\bA= n_\bA(S(\C^N))$, even for unbounded selfadjoint operators (with a common dense domain
so that $\av A\xi$ has a selfadjoint closure for all $\xi\in\R^n$).

The joint numerical range $N_\bA$ of an $n$-tuple $\bA$ of hermitian matrices has attracted recent interest
because it arises in many areas such as optimisation theory, quantum optics, quantum statistical
mechanics, Wigner transforms and quantum error correction, see \cite{PSW} for an overview. In the
Herglotz-Petrovsky-Leray formula the joint numerical range $N_\bA$ manifests as the trace of
the propagation cone $K(P^\vec A)$ at time $t=1$ and the fundamental formula
\begin{equation}\label{eqn:nrm}
\mu\circ  n_{\bA}^{-1} = (-i)^N(N-1)!F_{P^\bA}(1,\var)
\end{equation}
 holds as distributions linking the Nelson and Herglotz-Petrovsky-Leray representations for systems of 
 hermitian matrices. It is not immediately obvious why $F_{p}(1,\var)$ should be a distribution of order zero
 in the case that $p = P^\bA$ is a hyperbolic determinantal polynomial for a system $\bA$
 of hermitian matrices, see Theorem \ref{thm:5.2} below.
 
 For the case $n=2$ of two hermitian matrices $\bA =(A_1,A_2)$, the set $N_\bA$ may be identified with the usual
 numerical range in $\C$ of the single matrix $A=A_1+iA_2$ under the identification $j:(x,y)\mapsto x+iy$, $x,y\in\R$.
 Then $jN_\bA$ is the convex hull of an algebraic curve $C(A)=jC(\bA)$ studied by R. Kippenhahn \cite{Kip}.
 In this case M. Atiyah, R. Bott and L. G\aa rding show that the singular support $\sing(\cW_\bA)$ of the Weyl
 calculus for two hermitian matrices $\bA$ is exactly $C(\bA)$ \cite[Theorem 14.20]{ABG2}.
 That $\sing(\cW_\bA) \subset C(\bA)$ was shown by J. Bazer and D. Yen \cite{BY2}.
 None of these authors reference the numerical range studies of Kippenhahn \cite{Kip}. Bazer and Yen
 manage to avoid discussing hyperbolic polynomials by invoking a plane wave decomposition of the
 distribution $\cW_{t\bA}$ whose density with respect to Lebesgue measure is the ``Riemann matrix" 
for equation (\ref{eq:Weyl}).

At this stage it is prudent to introduce some modern developments. The {\it Cauchy transform} of a Schwartz distribution
$T\in \cS'(\R)$ is the function $\tilde T:z\mapsto\as T{g_z}$ with 
$g_z(x)=\frac1{2\pi i}\frac1{x-z}$, $z\in\C\setminus\R$, $x\in\R$ \cite[\S 5.1]{Brem}. Then
\begin{equation}\label{eqn:Brem}
T  =\lim_{\e\to 0+} \tilde T(x+i\e)-\tilde T(x+i\e)
\end{equation} 
in the sense of distribution  \cite[\S 5.6]{Brem}.
If we want a similar formula for distributions on $\R^n$ like the Weyl functional calculus $\cW_\bA$, then we need an analogue of the 
normalised Cauchy kernel $\frac{1}{2\pi i}\frac1 z$, $z\in \C$, $z\ne 0$, in higher dimensions.
Then we may obtain a representation for for the density of $\cW_\bA$ with respect to Lebesgue measure
like the Riemann matrix of Bazer and Yen \cite{BY1,BY2}. This is the point where Clifford analysis enters
as a functional calculus technique.

The {\it Clifford algebra} $\C_{(n)}$ over the field $\C$ is generated by standard basis vectors \\
$(e_0,e_1,\dots,e_n)$
in $\R^{n+1}$ with multiplication so that $e_0$ is the unit and  the formula $x^2 = -|x|^2$ holds for $x =\sum_{j=1}^nx_je_j$,
$x_j\in \R$. 
Then we obtain $e_je_k = - e_ke_j$ for $j\ne k$, $j,k=1,\dots,n$ and $e_j^2=-1$ for $j=1,\dots,n$.
The {\it Kelvin inverse} of $x\in\R^{n+1}$ is the vector $x^{-1} =\ol x/|x|^2$, $x\ne 0$.

The Cauchy-Riemann operator is $D=\sum_{j=0}^ne_j\frac{\pd}{\pd x_j}$ and for $\Sigma_n=\frac{2\pi^{\frac{n+1}{2}}}{\Gamma\left(\frac{n+1}{2}\right)}$ the function
$$E(x) = \frac1{\Sigma_n}\frac{\ol x}{|x|^{n+1}},\quad x\in \R^{n+1},\ x\ne 0, $$
is the corresponding Cauchy kernel with $DE=0$ on $\R^{n+1}\setminus\{0\}$. The involution is given by $\ol x =x_0-\bx$
for $x=x_0e_0+\bx$ with $x_0\in\R$ and $\bx\in \R^n\equiv \{0\}\times\R^n\hookrightarrow\R^{n+1}$. For product vectors
$\ol {e_je_k} = \ol{e}_k\,\ol{e}_j$, $j\ne k$, $j,k\ne 0$, and so on. It is easily checked that $E= \ol D\G_{n+1}$ for the fundamental
solution $\G_{n+1}$ of the Laplacian operator $\Delta e_0 =D\ol D$ in $\R^{n+1}$ so that $\G_{n+1}= \frac1{\Sigma_n}\frac{1}{|x|^{n-1}}$
and $\Delta \G_{n+1}=\d_0$ for $n\ge 2$. Setting $G_\om(x) = E(\om-x)$, $x\in\R^n$, for any Schwartz distribution $T \in \cS'(\R^n)$ we have
\begin{equation}\label{eq:jump}
T =\lim_{\e\to0+} \tilde T(x+\e e_0) - \tilde T(x-\e e_0)
\end{equation}
in the sense of distributions for the Cauchy transform $\tilde T(\om) = \as T{G_\om} $, $\om\in\R^{n+1}$, $\om_0\neq 0$
\cite[Theorem 27.7]{BDS} . When the Weyl functional calculus $\cW_\bA$ exists for operators $\bA$, 
the operator valued function $\om\mapsto G_\om(\vec A)= \widetilde{\cW_\bA}(\om)$ defined for all 
$\om\in\R^{n+1}$ with $\om_0\ne 0$ is called the {\it Cauchy kernel} for $\bA$.
For any bounded linear operators $\bA$ on a Banach space $X$, the Cauchy kernel $G_\om(\vec A)$ can be defined by a
series expansion for $\om\in \R^{n+1}$ with $|\om|$ large enough but this is not very useful. For $n=1$,
$G_\om(A) = \frac1{2\pi}(j\om -A)^{-1}$ in $\cL(X)$ if $j\om \notin\s(A)$.

Finally, if $\vec A$ is a hyperbolic system of bounded linear operators (or unbounded with a common dense domain
and uniform resolvent bounds) then $G_\om(\vec A)$ may be defined by a {\it plane wave decomposition} which agrees
with the definition in case $\bA$ satisfies the exponential bounds (\ref{eq:type1}) and so $\cW_\bA$ exists.
This central idea is due to Alan McIntosh in 1988 out of which the present investigation grew.

When $n$ is even, the plane wave formula and equation (\ref{eq:jump}) tell us that the distribution $\cW_\bA$ is a constant times the limit
\begin{equation}\label{eq:pwv}
\lim_{\e\to0+}\int\limits_{S^{n-1}}^{}\left(\langle \bs xI -
\vec A,s\rangle -\e s I\right)^{-n} +\left(\langle \bs xI -
\vec A,s\rangle +\e s I\right)^{-n}\/ds 
\end{equation}
in the sense of distributions for $\bx \in\R^n$. Here $S^{n-1}$ is the unit sphere in $\R^n$.
Even for matrices this is a useful observation because (\ref{eq:pwv}) is similar to the plane wave formula
employed by Bazer and Yen \cite{BY1} in their study of the Riemann matrix but with the pleasing advantage that
it also works for $n\ge 2$. The formula for odd $n$ is
\begin{equation}\label{eq:pwvo}
\lim_{\e\to0+}\int\limits_{S^{n-1}}^{}s\left(\langle \bs xI -
\vec A,s\rangle +\e s I\right)^{-n} -s\left(\langle \bs xI -
\vec A,s\rangle -\e s I\right)^{-n}\/ds.
\end{equation}
By perturbing the integrals over $S^{n-1}$ into the complex domain and observing that we have a closed
matrix valued differential form by homogeneity, the limits can be evaluated {\it pointwise}
and converge to the density of the distribution $\cW_\bA$ with respect to Lebesgue measure. This
is essentially the argument of Bazer and Yen in the case $n=2$ for two hermitian matrices.
The method just described is limited to finite dimensional operators but works for all hyperbolic
$n$-tuples $\bA$ of matrices and uses the full force of the Herglotz-Petrovsky-Leray representation
devised by Atiyah, Bott and G\aa rding.

In addition, we need the analogue $C(\vec A)$ of the Kippenhahn curves in dimensions $n =3,4,\dots$
for which $N_\bA =\co(C(\vec A))$. These semi-algebraic sets have only recently been determined by Plaumann, Sinn and Weis \cite{PSW} and promise to have applications to the many areas where joint numerical range is a central concept.
After settling the many issues involved, the position of the singular support $\sing(\cW_\bA)$ of the Weyl
functional calculus with respect to
the {\it boundary generating set} $C(\vec A)$ or {\it Kippenhahn variety} in $\R^n$ is examined. 
\section{The Kippenhahn Curves}
Let $\vec A = (A_1,A_2)$ be a pair of $(N\times N)$ hermitian
matrices. Set $A=A_1+iA_2$. 
As mentioned in the Introduction, an application of the Paley-Wiener Theorem\index{Paley-Wiener Theorem}
yields that the convex  hull of the
support~$\supp(\cW_{\bA})$ of the associated Weyl
distribution~$\cW_{\bA}$ coincides with the
{\it numerical range\index{numerical range}}
\begin{align*}
K(A)&:=jN_\bA\\
&=\SetOf{\langle Ax,x\rangle}{x\in\C^N,\ |x|=1}
\end{align*}
of the matrix $A$. For more precise information
on the location of~$\supp(\cW_{\bA})$ within the
numerical range of~$A$, we need to have a closer look at
the fine structure of~$K(A)$.

Of particular interest are certain plane algebraic
curves associated with~$A$ that were first investigated
by R.~Kippenhahn \cite{Kip} in~1952. We briefly recall the concepts
involved.

Let $\F=\R$ or $\C$. For $0\le k\le 3$, the
{\it Grassmannian\index{Grassmannian}}\/ $G_{3,k}\F$\glossary{$G_{3,k}\F$}, defined as the set of
all $k$-dimensional $\F$-subspaces of $\F^3$, is a compact
analytic \mbox{$\F$-manifold} of dimension $k(3-k)$. It
has a natural topology, induced by the differential
structure of the manifold, which is determined, for
example, by the metric~$h$ on $G_{3,k}\F$ with
$$
h(U,V)\
=\ \sup_{v\in V,\ |v|=1}
    \inf_{\phantom{\mbox{l}}u\in U,\ |u|=1\phantom{\mbox{l}}}
     \norm{u-v}
\qquad\mbox{ for all }U,V\in G_{3,k}\F \, .
$$

The {\it projective plane\index{projective plane}\/}~$\PG(\F^3)$\glossary{$\PG(\F^3)$} over~$\F$ is given by
$$
\PG(\F^3)\ =\ \bigcup_{0\leq k\leq 3}G_{3,k}\F \, .
$$
The one and two dimensional subspaces of~$\F^3$ are
usually called the {\it points}\/ and {\it lines}\/
in~$\PG(\F^3)$,  respectively.

By common abuse of notation we introduce {\it homogeneous
  coordinates\index{homogeneous coordinates}}\/ for the points in~$\PG(\F^3)$ as
$(u_1:u_2:u_3)=\F(u_1,u_2,u_3)$. The coordinates of a vector
in~$\F^3$ are expressed with respect to the standard basis
for~$\F^3$.

A {\it polarity\index{polarity}}\/ of~$\PG(\F^3)$ is a bijection
on~$\PG(\F^3)$ which reverses the inclusion of subspaces
and the square of which equals the identity mapping. The
{\it standard polarity\index{polarity!standard}}\/ $\pi$ is characterised by
$$
u^\pi\ =\ \SetOf{v\in\F^3}{\sum_{j=1}^{3} u_j v_j = 0}
\qquad \mbox{ for all }u\in G_{3,1}\F\,,
$$\glossary{$u^\pi$}
which gives $u^\pi\in G_{3,2}\F$. Using the polarity~$\pi$,
we can also introduce homogeneous coordinates for the lines
in~$\PG(\F^3)$ by setting $[v_1:v_2:v_3]=(v_1:v_2:v_3)^\pi$.

A nonempty subset $C$ of $G_{3,1}\F$ is called a
{\it plane $\F$-algebraic curve\index{algebraic curve}}\/ if it is the zero
locus of a homogeneous $3$-variate polynomial over~$\F$.
The defining polynomial of~$C$  is not uniquely determined:
if~$f$ defines the curve, then so  does, for example, $f^k$
for any $k\geq 1$. However, every  curve~$C$ has a defining
polynomial of minimal degree which is  unique up to a
constant factor. A curve is said to be {\it irreducible}\/
\index{algebraic curve!irreducible}if it has an irreducible defining polynomial. Since a
polynomial ring over a field is a unique factorisation
domain, each algebraic curve~$C$ is the union of finitely
many irreducible curves. If $C_1,\ldots,C_k$ are the
irreducible components of~$C$ with irreducible defining
polynomials $f_1,\ldots,f_k$, then $f=f_1\cdots f_k$ is
a defining polynomial of~$C$ of minimal degree. We call
$f$ a {\it minimal polynomial\index{minimal polynomial} of~$C$}.\/ Note that an
irreducible real algebraic curve is not necessarily
connected.

Let $f$ be a minimal polynomial of the algebraic curve
$$C=\SetOf{u\in G_{3,1}\F}{f(u)=0}.$$ 
A point $u\in C$ is
called {\it singular\index{algebraic curve!singular point}}\/ or a 
{\it singularity}\/ of~$C$ if
$({\partial f}/{\partial u_j})(u)=0$ for $j=1,2,3$. Observe
that $C$ has at most finitely many singular points. These
are the singular points of the irreducible components of~$C$
together with the points of intersection of any two of these
components. A nonsingular point $u\in C$ is called a
{\it simple\index{algebraic curve!simple point}}\/ point of~$C$. The curve~$C$ is the topological
closure of its simple points. Also, to every simple point
$u\in C$, there exists a neighbourhood of~$u$ in which $C$
admits a smooth parametrization.

Let $C$ be an irreducible plane algebraic curve with minimal
polynomial~$f$. At each simple point $u\in C$, we have
a unique tangent line to~$C$ which is given by
$$
\cT_uC\ =\
   \left[\frac{\partial f}{\partial u_1}(u):
           \frac{\partial f}{\partial u_2}(u):
             \frac{\partial f}{\partial u_3}(u)
   \right].
$$\glossary{$\cT_uC$}
If $C$ is not a projective line or a point, then it is
well-known that the set
$$\SetOf{(\cT_uC)^\pi}{u\in C \mbox{~simple}}$$ is
contained in a unique irreducible algebraic curve~$C^*$\glossary{$C^*$},
the so-called {\it dual curve}\index{algebraic curve!dual curve}\/ of~$C$. In fact, since
an algebraic curve has at most finitely many singularities,
the dual curve is the topological closure of the set
$\SetOf{(\cT_uC)^\pi}{u\in C \mbox{ simple}}$. We have
$C^{**}=C$. If $C$ is a projective line, then
$\SetOf{(\cT_uC)^\pi}{u\in C}$  consists of a single
point~$u$ in~$\PG(\F^3)$. In this case, we set
$C^*=\{u\}$ and define $C^{**}$ to be the image under~$\pi$
of the set of all lines in $\PG(\F^3)$ which pass
through~$u$. This again yields $C^{**}=C$. The dual curve
of a general plane algebraic curve $C$ is the union of
the dual curves of its irreducible components. In
particular, $C$ and $C^*$ have the same number of
irreducible components.

In general, it is difficult to derive an explicit equation
for the dual curve $C^*$ from the given equation of a
curve~$C$. However, from the above we obtain the following
criterion for a point in~$\PG(\F^3)$ to belong to~$C^*$.

\begin{lem}\label{lem:4.3.1}
  Let $(x_1:x_2:1)\in G_{3,1}\F$. If there exists a smooth
  local parametrization
  $\zeta\longmapsto (c(\zeta):s(\zeta):\mu(\zeta))$
  of~$C$, for $\zeta$ in an open set $U\subseteq\F$, and
  a point $z\in U$ such that $x_1c(z)+x_2s(z)+\mu(z)=0$
  and $x_1c'(z)+x_2s'(z)+\mu'(z)=0$, then the
  point $(x_1:x_2:1)$ belongs to $C^*$.
\end{lem}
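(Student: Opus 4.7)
The plan is to identify $(x_1:x_2:1)$ as the image under the standard polarity $\pi$ of the tangent line to $C$ at the curve point $u:=(c(z):s(z):\mu(z))$, and then conclude by the very definition of $C^*$.

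First I would recall from the setup that for a smooth parametrization $\zeta\mapsto(c(\zeta):s(\zeta):\mu(\zeta))$ of $C$, whenever the two vectors $(c(z),s(z),\mu(z))$ and $(c'(z),s'(z),\mu'(z))$ are linearly independent in $\F^3$, the tangent line $\cT_uC$ at the corresponding point $u\in C$ is precisely the projective line spanned by these two vectors; this agrees with the gradient formula for $\cT_uC$ recalled just above the lemma because the defining polynomial $f$ vanishes identically along the parametrization, so differentiating the identity $f(c(\zeta),s(\zeta),\mu(\zeta))\equiv 0$ shows that the gradient of $f$ at $u$ annihilates $(c'(z),s'(z),\mu'(z))$, while it automatically annihilates $(c(z),s(z),\mu(z))$ by Euler's homogeneous function identity (applied to simple points, where the gradient is nonzero).

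Next, by the definition of $\pi$, the point $(x_1:x_2:1)\in G_{3,1}\F$ is the image under $\pi$ of the projective line whose equation is $x_1u_1+x_2u_2+u_3=0$. The two hypotheses of the lemma are exactly the assertions that this line contains the points $(c(z):s(z):\mu(z))$ and $(c'(z):s'(z):\mu'(z))$, so (as long as these are linearly independent) it coincides with $\cT_uC$. Therefore $(x_1:x_2:1)=(\cT_uC)^\pi$, and if $u$ is a simple point of $C$ this places $(x_1:x_2:1)$ directly in $C^*$.

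The only delicate step is the degenerate situation: either $u$ is a singular point of $C$, or the two tangent vectors fail to be linearly independent at $z$ (so the ``tangent line'' is not yet defined by the parametrization). Here I would use the fact, recorded in the paragraph preceding the lemma, that $C$ has only finitely many singular points and that $C^*$ is the topological closure of $\SetOf{(\cT_uC)^\pi}{u\in C\text{ simple}}$. Perturbing $z$ slightly to $z_k\to z$ in $U$ along values where $(c(z_k),s(z_k),\mu(z_k))$ is a simple point and where the parametrization is nondegenerate, one obtains simple points $u_k\in C$ with tangent lines $\cT_{u_k}C$ whose $\pi$-images converge, by continuity of $c,s,\mu,c',s',\mu'$ and of $\pi$ on the relevant stratum, to the point defined by the linear form $x_1u_1+x_2u_2+u_3$. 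The closure property of $C^*$ then forces $(x_1:x_2:1)\in C^*$, completing the proof. I expect this limiting argument at singular or parametrically degenerate points to be the only place where one must be careful; the main body of the proof is purely linear-algebraic.
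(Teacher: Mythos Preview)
Your proposal is correct and follows essentially the same route as the paper: identify the tangent line $\cT_uC$ as the span of $(c(z),s(z),\mu(z))$ and $(c'(z),s'(z),\mu'(z))$, read the two hypotheses as saying that $(x_1:x_2:1)$ is $(\cT_uC)^\pi$, and conclude from the definition of $C^*$. The paper's proof is just these three lines; your additional justification via Euler's identity and your closure argument for singular or parametrically degenerate points are extra care that the paper does not spell out (it tacitly takes the parametrized point to be simple and the two vectors to be independent), but they are not a different method.
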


\begin{proof}
  The two points $(c(z):s(z):\mu(z))$ and
  $(c'(z):s'(z):\mu'(z))$ span the tangent line
  $\cT_{(c(z):s(z):\mu(z))}C$ to~$C$
  at $(c(z):s(z):\mu(z))$. The equations
  $x_1c(z)+x_2s(z)+\mu(z)=0$ and
  $x_1c'(z)+x_2s'(z)+\mu'(z)=0$ imply that
  $(x_1:x_2:1)=(\cT_{(c(z):s(z):\mu(z))}C)^\pi$.
  Hence $(x_1:x_2:1)$ belongs to the dual curve~$C^*$
  of~$C$.
\end{proof}

The details and further information on complex algebraic
curves can be found, for example, in~\cite{Sha}. The
literature for the real case is somewhat less easy to
access. As a general reference to the theory of real
algebraic geometry, see~\cite{BCR}.
\medskip

Let $A=A_1+iA_2\in\cL(\C^N)$. Following R.~Kippenhahn~\cite{Kip},
we define the complex algebraic curve~$C_\C(A)$\index{algebraic curve!Kippenhahn|(} in the complex
projective plane $\PG(\C^3)$ by setting its dual curve to be\glossary{$D(A)$}
$$
D(A)\
=\ \SetOf{(c:d:\mu)\in G_{3,1}\C}
                 {\det(cA_1+dA_2+\mu I)=0}.
$$
In~\cite{Kip}, Kippenhahn showed that the real part~$C_\R(A)$
of the curve $C_\C(A)=D(A)^{*}$\glossary{$C_\C(A)$} is contained in the affine
subplane
$F=\SetOf{(\alpha_1:\alpha_2:1)}{(\alpha_1,\alpha_2)\in\R^2}$
of~$\PG(\R^3)$ and, identifying $F$ with $\R^2$ in the
canonical way, that the convex hull~${\rm co}(C_\R(A))$
of~$C_\R(A)$ is precisely the numerical range of~$\vec A$.

The curve $C_\R(A)$\glossary{$C_\R(A)$} considered as a real algebraic curve
in~$\PG(\R^3)$ is the dual curve of the real part of~$D(A)$
given by
$$
D_\R(A)\
=\ \SetOf{(c:d:\mu)\in G_{3,1}\R}
                 {\det(cA_1+dA_2+\mu I)=0}.
$$\glossary{$D_\R(A)$}
Every point $u\in D_\R(A)$ has a representation
$(\cos\theta:\sin\theta:\mu)$ for some
$\theta\in [0,\pi)$ and $\mu\in\R$. As $u$ is a zero of
$\det(cA_1+dA_2+\mu I)$, it  follows that $-\mu$ is
an eigenvalue of the operator
$\cA(\theta)=\cos\theta\,A_1+\sin\theta\,A_2$.

Note that the points in $D_\R(A)$ are in one-to-one
correspondence with the lines $L_{y,t}$ in~$\R^2$ satisfying
$\langle x,t\rangle\in\sigma(\langle \bA,t\rangle)$ for all
$x\in L_{y,t}$. For
$u=(\cos\theta:\sin\theta:\mu)\in D_\R(A)$,
take $t = (\cos\theta,\sin\theta)\in \T$ and $\by\in\R^2$
such that $\langle y,t\rangle=-\mu$. Then $u^\pi$ is
the two dimensional subspace
$$
\bigcup\SetOf{(x_1:x_2:1)}{(x_1,x_2)\in L_{y,t}}
$$
of $\R^3$, that is, $L_{y,t}\times\{1\}$ is the line in
which the plane $u^\pi$ normal to~$u$ in~$\R^3$ cuts the
plane~$\{x_3 = 1\}$.
\section{Clifford Analysis}
The basic idea of forming a 
Clifford algebra\index{Clifford!algebra|(} $\cA$ with $n$ generators
is to take the smallest real or complex algebra $\cA$ with an identity
element $e_0$ such that $\R\oplus\R^n$ is embedded in $\cA$ via the
identification of $(x_0,\bs x)\in\R\oplus\R^n $ with $x_0 e_0+\bs x\in \cA$ 
and the identity
$$\bs x^2 = -|\bs x|^2e_0 = -(x_1^2+x_2^2+\cdots+x_n^2)e_0$$
holds for all $\bs x\in\R^n$. Then we arrive at the following
definition.

Let $\F$ be either the field $\R$ of real numbers or the
field $\C$ of complex numbers. The {\it Clifford algebra\index{Clifford!algebra}} 
$\Fn$\glossary{$\Fn$} over $\F$ is a 
$2^n$-dimensional algebra with unit defined as follows. Given the standard basis vectors
$e_0, e_1,\dots,e_n$ of the vector space $\F^{n+1}$, the basis vectors $e_S$ of
$\Fn$ are indexed by all finite subsets $S$ of $\{1,2,\dots,n\}$. The basis vectors are
determined by the following rules for multiplication on $\Fn$:
\begin{eqnarray*}
e_0 = 1,\qquad&&\cr 
e_j^2 = -1,\qquad&&{\rm for}\quad 1 \leq j \leq n\cr 
e_je_k =
-e_ke_j = e_{\{j,k\}},\qquad&&{\rm for}\quad 1 \leq j < k  \leq n\cr
e_{j_1}e_{j_2}\cdots e_{j_s} = e_S,\qquad&&
{\rm if}\quad 1 \leq j_1 < j_2 < \dots<
j_s
\leq n\\
&&\qquad\qquad{\rm and}\quad S=\{j_1,\dots,j_s\}.
\end{eqnarray*}
\glossary{$e_S$}
Here the identifications $e_0 =
e_\emptyset$ and $e_j = e_{\{j\}}$ for $1 \leq j \leq n$ have been made.

Suppose that $m \leq n$ are positive integers. The vector space $\R^m$ is identified with a
subspace of $\Fn$ by virtue of the embedding $(x_1,\dots,x_m)\longmapsto \sum_{j=1}^mx_je_j$. On
writing the coordinates of $x\in\R^{n+1}$ as $x = (x_0,x_1,\dots,x_n)$, the space $\R^{n+1}$ is
identified with a subspace of $\Fn$ with the embedding $(x_0,x_1,\dots,x_n)\longmapsto
\sum_{j=0}^nx_je_j$.

The product of two elements $u = \sum_S u_S e_S$  and $v = \sum_S v_S  e_S,
v_S
\in \F$ with coefficients $u_S \in \F$ and $v_S \in \F$ is $uv = \sum_{S,R}u_S v_R e_S e_R$. According
to the rules for multiplication,
$e_Se_R$ is $\pm1$ times a basis vector of $\Fn$. The {\it scalar part} of $u = \sum_S u_S e_S,
u_S
\in \F$ is the term $u_\emptyset$, also denoted as $u_0$.

The Clifford algebras $\R_{(0)},\R_{(1)}$ and $\R_{(2)}$ are the real, 
\glossary{$\R_{(0)}$}\glossary{$\R_{(1)}$}\glossary{$\R_{(2)}$}
complex numbers and the
quaternions\index{quaternions}, respectively. In the case of $\R_{(1)}$, the vector $e_1$ is identified with
$i$ and for $\R_{(2)}$, the basis vectors $e_1, e_2, e_1e_2$ are 
identified with $i,j,k$ respectively. 
 
The conjugate $\overline{{e_S}}$ of a basis element $e_S$ is defined so that $e_S\overline {e_S}
=
\overline {e_S}e_S = 1$. Denote the complex conjugate of a number $c \in \F$ by
$\overline c$. Then the operation of {\it conjugation\index{Clifford!conjugation}} 
$u\longmapsto\overline{u}$\glossary{$\overline{u}$} defined by
$\overline u = \sum_S \overline{u_S}\,\overline{e_S}$ for every $u = \sum_S u_S e_S, u_S \in
\F$ is an involution of the Clifford algebra $\Fn$ and $\overline{vu}=\overline{u}\,
\overline{v}$ for all elements $u$ and $v$ of $\Fn$. Because $e_j^2=-1$, the conjugate 
$\overline{e_j}$ of $e_j$ is $-e_j$.
An inner product is defined on $\Fn$ by
the formula $(u,v) = [u\overline{v}]_0 = \sum u_S\overline{v_S}$ for every $u = \sum_S u_S e_S$
and $v =
\sum_S v_S e_S$ belonging to $\Fn$. The corresponding norm is written as $|\cdot|$.

For a Banach space $X$, the tensor product $X_{(n)} : = X\otimes\Cn$ denotes the
module of all sums $u = \sum_S u_S e_S$ with coefficients $u_S\in X$ endowed with the norm
$$\|u\|_{X_{(n)}} = \left(\sum_S \|u_S\|_X^2\right)^\frac12.$$
 The left product
$\lambda u$ and right product $ u\lambda$ are defined in the obvious way for all $\lambda\in \Cn$.
The space $\cL_{(n)}(X_{(n)})$ of right-module homomorphisms is identified with $\cL(X)_{(n)}$ by writing
$$\left(\sum_S T_S e_S\right)\left(\sum_{S'} u_{S'} e_{S'}\right) = \sum_S (T_Su_{S'}) e_Se_{S'},$$
so that $T(u\lambda) = (Tu)\lambda$ for all $u\in X_{(n)}$, $\lambda \in \Cn$ and $T = \cL(X)_{(n)}$.
The linear subspace $\{Te_0:T\in\cL(X)\}$ of $\cL(X)_{(n)}\equiv \cL_{(n)}(X_{(n)})$ is identified with $\cL(X)$.
The norm induced on $\cL_{(n)}(X_{(n)}) \equiv \cL(X)_{(n)}$ by the norm of $X_{(n)}$ is given by
$$\big\|\sum_S T_S e_S\big\|_{\cL_{(n)}(X_{(n)})} = \left(\sum_S \|T_S\|_{\cL(X)}^2\right)^\frac12.$$
For an $n$-tuple $\bA$ of bounded linear operators on $X$, it is convenient to write $\|\bA\|$  for the norm
$\| A_1e_1+\dots+A_ne_n\|_{\cL_{(n)}(X_{(n)})} = \big(\sum_j \|A_j\|_{\cL(X)}^2\big)^\frac12$.

Because $x = (x_0,x_1,\dots,x_n)\in\R^{n+1}$ is identified
with the element $\sum_{j=0}^n x_je_j$ of $\R_{(n)}$, the conjugate $\overline x$
of $x$ in $\R_{(n)}$ is $x_0e_0 -x_1e_1-\cdots -x_ne_n$. A useful feature of Clifford algebras
is that a nonzero vector $x\in \R^{n+1}$ has an inverse $x^{-1}$ in the algebra
$\R_{(n)}$ (the {\it Kelvin inverse\index{Kelvin inverse}}) given by 
$$
x^{-1} = \frac{\overline{x}}{|x|^2} = \frac{x_0e_0-x_1e_1-\cdots -x_ne_n}
{x_0^2+x_1^2+\cdots +x_n^2}.
$$
The vector $x = (x_0,x_1,\dots,x_n)\in\R^{n+1}$ will often be written as  $x = x_0e_0 +\bs x$ with
$\bs x= (x_1,\dots,x_n)\in\R^n$.

Because $\Cn$ is an algebra over the complex numbers, the \textit{spectrum} $\s(i \bs x)$
of the vector $i\bs x$ with $\bs x \in\R^n$ is the set of all $\lambda\in \C$ for which
$\lambda e_0-i\bs x$ is \textit{not invertible} in $\Cn$. The formula
$$(\lambda e_0-i\bs x)^{-1} = \frac{\lambda e_0+i\bs x}{\lambda^2-|\bs x|^2},\quad 
\lambda\neq \pm|\bs x|,$$
ensures that $\s(i \bs x) =\{\pm|\bs x|\}$ if $\bs x\neq 0$ and $\s(0) = \{0\}$.
For $\bs x \neq 0$, the spectral representaion
$$i\bs x = |\bs x|\xchi_+(\bs x) + (-|\bs x|)\xchi_-(\bs x)$$
holds with respect to the spectral idempotents
$$\xchi_\pm(\bs x) =\frac12\left(e_0\pm i\frac{\bs x}{|\bs x|}\right).$$
The vector $i\bs x$ is actually selfadjoint with respect to the inner product of $\Cn$ defined above.
For every function $f:\{\pm|\bs x|\}\to \C$ there is an element
\begin{equation}\label{eqn:SR}
f(i\bs x)=f(|\bs x|)\xchi_+(\bs x) + f(-|\bs x|)\xchi_-(\bs x) 
\end{equation}
of $\Cn$ associated with $f$ by the functional calculus for selfadjoint operators.
For a polynomial $p(z) = a_0 +a_1z+\dots+a_kz^k$, the expression
$$p(i\bs x) = a_0e_0 +a_1(i\bs x)+\dots+a_k(i\bs x)^k$$
expected  in $\Cn$ is obtained. The identities $\xchi_{\R_\pm}(i\bs x) = \xchi_{\pm}(\bs x)$
hold for the characteristic functions $\xchi_{\R_\pm}$ of the half lines $\R_+=\{t > 0\}$
and $\R_-=\{t < 0\}$.

Given an $n$-tuple of operators $\bs A=(A_1,\dots,A_n)$ acting on a Hilbert space $\cH$,
the expression $\cA =\sum_{j=1}^n e_jA_j$ acts on $\Hn:= \cH\otimes\Cn$ via the formula
$$\cA u =  \sum_{j=1}^n\sum_{S} (e_je_S)(A_ju_S),\quad u = \sum_{S} u_Se_S.$$
The coefficients $u_S$ are elements of the Hilbert space $\cH$ and $u_S\otimes e_S$ is written simply as $u_Se_S$ for all $S\subseteq \{1,\dots,n\}$
.

Using Fourier theory, the functional calculus for the selfadjoint differential operator
$$\cD = \sum_{j=1}^n e_j\frac{\partial}{\partial x_j}$$
acting in the Hilbert space $L^2_{(n)}(\R^n) := L^2(\R^n) \otimes\Cn$ can be calculated explicitly. 

If $\hat u(\xi) = \int_{\R^n}
e^{-i\langle x,\xi\rangle}u(x)\,dx$ denotes the Fourier transform of $u\in L^1(\R^n)$,
then according to the Fourier-Plancherel Theorem,
the linear map $u\longmapsto (2\pi)^{-n/2}\hat u$, $u\in L^1\cap L^2(\R^n)$, extends to an isometry of $L^2(\R^n)$. For each $j=1,\dots,n$, the selfadjoint operator $\frac1i\frac{\partial}{\partial x_j}$
defined in $L^2(\R^n)$ satisfies
$$\left(\frac1i\frac{\partial u}{\partial x_j}\right)\widehat{\phantom{\big|}}(\xi)=\xi_j\hat u(\xi)$$
almost everywhere for each $u\in L^2(\R^n)$ in its domain. Furthermore, for any bounded measurable function $\varphi$ defined on $\R^n$, the operator $\varphi\left(\frac1i\frac{\partial}{\partial x_1},\dots,\frac1i\frac{\partial}{\partial x_n}\right)$ satisfies
$$\left(\varphi\left(\frac1i\frac{\partial}{\partial x_1},\dots,\frac1i\frac{\partial}{\partial x_n}\right)u\right)\widehat{\phantom{\big|}}(\xi)=\varphi(\xi)\hat u(\xi) $$
almost everywhere for $u\in L^2(\R^n)$.
Similarly,
$$\big(f(\cD)u)\big)\widehat{\phantom{|}}(\xi) = f(i\xi)\hat u(\xi),\quad u\in L^2_{(n)}(\R^n),
\ \xi\in \R^n,$$
is valid for any bounded measurable function $f:\R\to \C$ with the understanding that
for each $\xi\in\R^n$, the vector $f(i\xi)\in \R^n$ is given by the functional calculus (\ref{eqn:SR}) of the selfadjoint element $i\xi$ of the Clifford algebra $\Cn$.

What is usually called {\it Clifford analysis\index{Clifford!analysis|)}} is the
study of functions of finitely many real variables, 
which take values in a Clifford algebra\index{Clifford!algebra|)},
and which satisfy higher dimensional analogues of the 
Cauchy-Riemann equations\index{Cauchy-Riemann equations|(}. 

It is worthwhile to spell out the direction this analogy takes. The
Cauchy-Riemann equations for a complex valued function $f$ defined 
in an open subset of the complex plane may be represented as $\overline \pd f = 0$
for the operator \glossary{$\overline \pd$}
$$\overline \pd =\frac{\pd}{\pd x}+i\frac{\pd}{\pd y},\quad z=x+iy\in\C.$$
The fundamental solution $E$ of the operator $\overline \pd$ is the solution
in the sense of Schwartz distributions of the equation $\overline \pd E = \delta_0$
for the unit point mass $\delta_0$ at zero. Then 
$$E(z) = \frac{1}{2\pi }\,\frac{1}{z} = \frac{1}{2\pi }\,\frac{\overline z}{|z|^2},\quad \hbox{for }
z=x+iy\in\C\setminus\{0\}.$$
A function $f$ satisfying $\overline \pd f = 0$ in a neighbourhood of a simple closed
contour $C$ together with its interior can be represented as
$$f(z) = \frac1{i}\int_C E(\zeta -z)f(\z)\,d\z = \int_C E(\zeta -z)\bs n(\z)f(\z)\,d|\z|$$
at all points $z$ inside $C$. Here $\bs n(\z)$ is the outward unit normal at $\z\in\C$,
$d|\z|$ is arclength measure so that $i\bs n(\z)d|\z| = d\z$.
The higher dimensional analogue for functions taking values in
a Clifford algebra is as follows.

A function $f:U \to \Fn$ defined in an
open subset $U$ of $\R^{n+1}$ has a unique representation $f = \sum_Sf_Se_S$ in  terms of
$\F$-valued functions $f_S$, $S \subseteq \{1,\dots,n\}$ in the sense that $f(x) =
\sum_Sf_S(x)e_S$ for all $x\in U$. Then $f$ is continuous, differentiable and so on, in the
normed space $\Fn$, if and only if  for all finite subsets $S$ of $\{1,\dots,n\}$, its scalar
component functions $f_S$ have the corresponding property. Let $\pd_j$ be the operator of
differentiation of a scalar function in the
$j$'th coordinate in $\R^{n+1}$ -- the coordinates of $x\in\R^{n+1}$ are written as $x =
(x_0,x_1,\dots,x_n)$. For a continuously differentiable function $f:U \to \Fn$ with $f =
\sum_Sf_Se_S$ defined in an
open subset $U$ of $\R^{n+1}$, the functions $Df$ and $fD$ are defined by 
\begin{align*}
Df &= \sum_S\left((\pd_0
f_S)e_S+\sum_{j=1}^n(\pd_j f_S)e_je_S\right)\\
fD &= \sum_S\left((\pd_0
f_S)e_S+\sum_{j=1}^n(\pd_j f_S)e_Se_j\right).
\end{align*} 
Similarly, the conjugate operator is defined by
$$\overline D =e_0\pd_0 - \sum_{j=1}^ne_j\pd_j$$
so that the Laplace operator $\Delta =\pd_0^2+\dots+\pd_{n}^2$ in $\R^{n+1}$ has factorizations
$$\Delta = D\overline D=\overline D D.$$

Now suppose that $f$ is an $\Fn$-valued, continuously differentiable function defined in an
open subset $U$ of $\R^{n+1}$. Then $f$ is said to be 
{\it left monogenic\index{function!left monogenic|(}} in
$U$ if
$Df(x) = 0$ for all $x\in U$ and {\it right monogenic\index{function!right monogenic}} in $U$ if
$fD(x) = 0$ for all $x\in U$.

The following result connects $\R^{n+1}$-valued monogenic functions with systems of conjugate harmonic functions.

\begin{prp} Let $F =u_0e_0-\sum_{j=1}^nu_je_j$ be an $\R^{n+1}$-valued function defined on an open subset $\Omega$ of $\R^{n+1}$. Conditions (1)-(4) below are equivalent in $\Omega$.
\begin{itemize}
\item[(1)] The $(n+1)$-tuple $U = (u_j)_{j=0}^n$ is a system of conjugate harmonic functions in $\Omega$, that is, $U$ satisfies the generalised Cauchy-Riemann equations $\hbox{\rm div}\, U=0$ and $\hbox{\rm curl}\, U=0$.
\item[(2)] $F$ is left monogenic.
\item[(3)] $F$ is right monogenic.
\item[(4)] The 1-form $\omega := u_0dx_0 -u_1dx_1-\dots-u_ndx_n$ satisfies $d\om=0$ and $d^*\om=0$, where $d$ and $d^*$ are the exterior differential operator and its formal transpose, respectively.
\item[(5)] In the case that $\Omega$ is simply connected, then the above conditions are equivalent to
the existence of a real valued harmonic function $v$ in $\Omega$ such that $U=\hbox{\rm grad}\,v$, so that $F=\overline Dv$.
\end{itemize}
\end{prp}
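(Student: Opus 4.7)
The plan is to close the chain $(5)\Rightarrow(2)\Leftrightarrow(3)\Leftrightarrow(1)\Leftrightarrow(4)$ by direct Clifford computation, and to recover $(1)\Rightarrow(5)$ from the Poincar\'e lemma under the simply-connectedness hypothesis. The implication $(5)\Rightarrow(2)$ is immediate from the factorization $\Delta e_0 = D\overline D$ recorded earlier in the excerpt: if $F=\overline D v$ with $v$ harmonic, then $DF = D\overline D v = (\Delta v)e_0 = 0$, so $F$ is left monogenic.

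For $(1)\Leftrightarrow(2)\Leftrightarrow(3)$, I would expand $DF$ by linearity, applying the rules $e_j^2=-1$ and $e_je_k=-e_ke_j$ for $0<j<k$, and collect terms by grade. The grade-zero (scalar) component of $DF$ comes out to $\partial_0 u_0 + \sum_{j=1}^n \partial_j u_j = \operatorname{div} U$; the grade-one coefficient of $e_j$, for $1\le j\le n$, becomes $\partial_j u_0 - \partial_0 u_j$; and the grade-two coefficient of $e_ae_b$, for $1\le a<b\le n$, becomes $\partial_b u_a - \partial_a u_b$. Requiring all of these to vanish is precisely $\operatorname{div} U = 0$ together with $\partial_i u_j = \partial_j u_i$ for $0\le i<j\le n$, i.e.\ $\operatorname{curl} U = 0$. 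The parallel computation of $FD$ gives identical scalar and vector components, and its bivector components are the sign-reversal of those of $DF$, so $DF=0$, $FD=0$, and condition (1) are all equivalent.

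For $(2)\Leftrightarrow(4)$, I use the identification $e_\mu\leftrightarrow dx_\mu$, under which $F$ corresponds term-for-term to $\omega$. The scalar part of $DF$ is then (up to sign) the codifferential $d^*\omega$ and the bivector part of $DF$ encodes (up to sign) the exterior derivative $d\omega$, so the single equation $DF=0$ splits into $d\omega=0$ and $d^*\omega=0$. Finally, for $(1)\Rightarrow(5)$ on simply connected $\Omega$, the condition $\operatorname{curl} U=0$ together with the Poincar\'e lemma produces a $C^\infty$ scalar $v$ on $\Omega$ with $U=\operatorname{grad} v$, i.e.\ $u_\mu = \partial_\mu v$ for $0\le \mu\le n$; then $F = (\partial_0 v)e_0 - \sum_{j=1}^n (\partial_j v)e_j = \overline D v$, and the remaining condition $\operatorname{div} U=0$ reads $\Delta v = \operatorname{div}\operatorname{grad} v = 0$, so $v$ is harmonic.

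The main obstacle I expect is the sign bookkeeping in the Clifford expansion and in the comparison with $d\omega$ and $d^*\omega$: the minus signs in $F = u_0 e_0 - \sum_{j\ge 1}u_j e_j$ are chosen precisely so that they conspire with $e_j^2=-1$ to reproduce the classical Stein--Weiss/Cauchy--Riemann form of (1), and the very same sign choice is what allows the 1-form $\omega$ to match $F$ cleanly under $e_\mu\leftrightarrow dx_\mu$. Once the correspondence is set up carefully, the only non-algebraic ingredient needed is the Poincar\'e lemma in the last step.
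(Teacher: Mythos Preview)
The paper states this proposition without proof; it is a standard fact in Clifford analysis (compare the paper's reference \cite{BDS}). Your plan---grade-by-grade expansion of $DF$ and $FD$ for $(1)\Leftrightarrow(2)\Leftrightarrow(3)$, the factorisation $D\overline D=\Delta e_0$ for $(5)\Rightarrow(2)$, and the Poincar\'e lemma for $(1)\Rightarrow(5)$---is exactly the standard argument and is correct. Your computation of the graded pieces of $DF$ is right, and the observation that $FD$ differs from $DF$ only by a sign on the bivector part is the efficient way to get $(2)\Leftrightarrow(3)$.

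One caution on $(2)\Leftrightarrow(4)$: the identification $e_\mu\leftrightarrow dx_\mu$ is not as clean as you suggest, because in $\R_{(n)}$ the element $e_0$ is the scalar unit rather than an anticommuting generator, so $F$ is a paravector (scalar plus vector) and $D$ does not literally correspond to $d+d^*$ under that map. The cleaner route is to link (4) with (1) directly: for a $1$-form $\sum_\mu u_\mu\,dx_\mu$ one has $d=0\Leftrightarrow\operatorname{curl}U=0$ and $d^*=0\Leftrightarrow\operatorname{div}U=0$ by inspection. If one takes the minus signs in $\omega$ literally as printed, a direct check shows that $d\omega=0$ and $d^*\omega=0$ encode the curl and divergence of $(u_0,-u_1,\dots,-u_n)$ rather than of $U$, so either the signs in the displayed $\omega$ are a misprint or a non-Euclidean $d^*$ is intended. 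Your remark that ``the very same sign choice'' serves both $F$ and $\omega$ is therefore the one place your intuition is off: the minus signs in $F$ are there to compensate for the asymmetric role of $e_0$ in the Clifford algebra, and that asymmetry is absent on the differential-form side. This does not affect the rest of your argument.
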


For each $x\in \R^{n+1}$, the function $G(\var,x)$ defined by 
\begin{equation}\label{eqn:3.1}
G(\omega,x) = G_\omega(x) = \frac{1}{ \Sigma_n}\, \frac{\overline{\omega-x}}{ |\omega-x|^{n+1}}
\end{equation}
for every $\omega \neq x$ is both left and right monogenic as a function of $\omega$. Here the volume
${2\pi^{\frac{n+1}{2}}/\Gamma\left(\frac{n+1}{2}\right)}$ of the unit
$n$-sphere in $\R^{n+1}$ has been denoted by $\Sigma_n$ and we have used the identification of $\R^{n+1}$
with a subspace of $\R_{(n)}$ mentioned earlier.

The function $G(\var,x)$, $x\in\R^{n+1}$ plays the role in Clifford analysis
of a {\it Cauchy kernel\index{Cauchy!kernel}}. Rewriting $G(\omega,x)$ as $E(\omega - x)$ for all $\omega\neq x$
in $\R^n$, it follows that the $\R^{n+1}$-valued function 
$$E(x) = \frac{1}{ \Sigma_n}\, \frac{\overline x}{|x|^{n+1}}$$
defined for all $x\ne 0$ belonging to $\R^{n+1}$ is the fundamental solution of
the operator $D$, that is, $DE = \delta_0e_0$ in the sense of Schwartz distributions, because
$E=\overline D\G_{n+1} = \G_{n+1}\overline D $ for the fundamental solution
$$\G_{n+1}(x)=\left\{\begin{array}{rl}\displaystyle -\frac1{(n-1)\Sigma_n}\frac1{|x|^{n-1}},&x\neq0,\ n\ge 2\\
&\\
\displaystyle\frac1{2\pi}\log|x|,&x\neq0,\ n = 1,
\end{array}\right.
$$
of the Laplace operator $\Delta$ in $\R^{n+1}$.
Then a function satisfying $Df = 0$ in an open set can be retrieved from a
surface integral involving $E$.

Suppose that
$\Omega\subset\R^{n+1}$ is a bounded open set with smooth boundary $\pd \Omega$ and exterior unit normal
$\bs n(\omega)$ defined for  all $\omega\in \pd \Omega$. For any left monogenic function $f$ defined in a
neighbourhood
$U$ of $\overline \Omega$, the Cauchy integral formula
\begin{equation}\label{eqn:CIF}
\int_{\pd \Omega}G(\omega,x)\bs n(\omega)f(\omega)\,d\mu(\omega) =
\left\{\begin{array}{rl}  f(x),&{\rm if}\quad
x\in\Omega;\cr 
0,&{\rm if}\quad x\in U\setminus\overline\Omega.
\end{array}\right.
\end{equation}
is valid. Here
$\mu$ is the surface measure of $\pd
\Omega$. The result is proved in \cite[Corollary 9.6]{BDS}
by appealing to  by Stoke's theorem. If $g$ is right monogenic in $U$ then
$\int_{\pd \Omega} g(\omega)\bs n(\omega)f(\omega)\,d\mu(\omega) = 0$ \cite[Corollary 9.3]{BDS}.
The manifold $\pd \Omega$ integral (\ref{eqn:CIF}) can then be deformed across any set not containing $x$.

In terms of differential forms, the $\F^{n+1}$-valued $n$-form 
$$\eta = \sum_{j=0}^n (-1)^j e_j\,dx_0\wedge\dots\wedge \widehat{dx_j}\wedge\dots\wedge dx_n
\quad(\,\widehat{\phantom{a}} \equiv\hbox{omitted}),$$
is defined on $\R^{n+1}$ and its pullback to the orientable $n$-dimensional manifold $\pd \Omega$ is denoted by the same symbol. Then for any continuous $\Fn$-valued functions $u,v$ on $\pd\Omega$, the equality
$$\int_{\pd \Omega} u \eta v = \int_{\pd \Omega} u(\omega)\bs n(\omega)v(\omega)\,d\mu(\omega)$$
holds. Now suppose that $f,g$ are any continuously differentiable $\Fn$-valued functions defined on $U$.
The differential of $g \eta f$ is equal to $\big((gD)f + g(Df)\big)\,dx_0\wedge\dots\wedge dx_n$, so Stokes' Theorem gives
$$\int_{\Omega}\big((gD)f + g(Df)\big)\,d\lambda = \int_{\pd \Omega} g \eta f $$
with respect to Lebesgue measure $\lambda$ on $\R^{n+1}$. The Cauchy integral formula
(\ref{eqn:CIF}) follows by shrinking $\pd \Omega$ to a sphere about $x\in \Omega$, see \cite[Corollary 9.6]{BDS}.

 \begin{xmp}\label{xmp:2.1}
For the case $n = 1$, the Clifford algebra $\R_{(1)}$ is identified with $\C$. A continuously
differentiable function $f:U\to\R_{(1)}$ defined in an open subset $U$ of $\R^2$ satisfies $Df
= 0$ in $U$ if and only if it satisfies the 
Cauchy-Riemann equations\index{Cauchy-Riemann equations|)} 
$\overline\pd f = 0$ in $U$. For each
$x,\omega\in\R^2$,
$x\neq \omega$, the formula
$$G(\omega,x)= \frac{1}{2\pi}\ \frac{1}{ \omega-x}$$
holds. The inverse is taken in $\C$. 
As indicated above, the tangent at the point
$\z(t)$ of the portion $\{\z(s): a < s < b\}$ of a positively oriented   rectifiable curve $C$
is
$i$ times the normal $\bs n(\z(t))$ at
$\z(t)$, so the equality $d\z = i.\bs n(\z)\,d|\z|$ shows that (\ref{eqn:CIF}) is the Cauchy integral
formula for a simple closed contour $C$ bounding a region $\Omega$.
\end{xmp}

\section{The Cauchy kernel}
Armed with the Cauchy integral formula (\ref{eqn:CIF}) for monogenic functions, formula
(\ref{eqn:fc}) is established for the $n$-tuple $\bs A =
(A_1,\dots,A_n)$ of bounded linear operators on
a Banach space $X$ by  substituting the $n$-tuple $\bs A$ for the vector $y\in\R^n$.
If $n$ is odd and $\bs A$ is a commutative $n$-tuple, that is, $A_jA_k = A_kA_j$ for $j,k=1,\dots,n$,
and each operator $A_j$ has real spectrum $\s(A_j)\subset \R$ for $j=1,\dots,n$, then for suitable $\omega\in\R^{n+1}$, the expression
\begin{equation}\label{eqn:3.19}
G_\om(\bs A)=\frac{1}{\Sigma_n }|\omega I- \bs A|^{-n-1}(\overline{\omega I- \bs A})
\end{equation}
makes sense as an element of
$\cL_{(n)}(X_{(n)})$. For an even integer  $m$
$$|\omega I-\bs A|^{-m} = \bigg(\big(\omega_0^2I
+\sum_{j=1}^n(\omega_jI-A_j)^2\big)^{-1}\bigg)^{m/2}$$  and
$\overline{\omega I- \bs  A}= \omega_0I-\sum_{j=1}^n(\omega_jI-A_j)e_j$ for $\om=\om_0e_0+\bs\om \in \R^{n+1}$, 
$\bs\om =\sum_{j=1}^n\om_je_j$.

An appeal to the Spectral Mapping Theorem shows that the operator 
$$\omega_0^2I +\sum_{j=1}^n(\omega_jI-A_j)^2$$ 
is invertible in $\cL(X)$ for each $\omega_0 \neq 0$, so the $\cL_{(n)}(X_{(n)})$-valued function
$\om\longmapsto G_\om(\bs A)$ is defined on the set $\R^{n+1}\setminus (\{0\}\times \g(\bs A))$ with 
$$\g(\bs A) = \left\{(\om_1,\dots,\om_n): \sum_{j=1}^n(\omega_jI-A_j)^2
\hbox{ is not invertible in }\cL(X)\,\right\}.$$
Off $\g(\bs A)$, the function $\om \longmapsto G_\om(\bs A)$ is left and right monogenic
and formula (\ref{eqn:fc}) defines a functional calculus $f\longmapsto f(\bs A)$ which coincides with Taylor's functional calculus $\tilde f\longmapsto \tilde f(\bs A)$. Any left monogenic function
$f$ defined in a neighbourhood $U$ of $\g(\bs A)$ in $\R^{n+1}$ has a holomorphic counterpart $\tilde f$
defined in a neighborhood $\tilde U$ of  $\g(\bs A)$ in $\C^{n}$ by taking the power series expansion about points of
$U\cap (\{0\}\times \R^n)$ \cite{BDS}. The left monogenic function $f$ is referred to as
the \textit{Cauchy-Kowaleski extension} of $\tilde f\restriction (\tilde U\cap\R^n)$ to $\R^{n+1}$ \cite{BDS}.

In the case of even $n =2,4,\dots$, the operator $|\omega I- \bs A|^{-n-1}$ needs to be defined suitably. The direct formulation employs Taylor's functional calculus, but by using the 
\textit{plane wave decomposition} of the Cauchy kernel (see Section 4.2), the case of even $n$ and noncommuting operators can be treated simultaneously.

For an $n$-tuple $\bs A =
(A_1,\dots,A_n)$ of commuting bounded linear operators on
a Banach space $X$ with real spectra, the nonempty compact subset  $\g(\bs A)$ of $\R^n$
coincides with Taylor's joint spectrum defined in terms of the Koszul complex \cite[Definition III.6.4]{Vascu}. 

In general, the symbol $\g(\bs A)$ is used to denote the set of points of $\R^{n+1}$ in the complement of the domain where the Cauchy kernel $\om\longmapsto G_\om(\bs A)$ is defined and monogenic in the Banach module $\cL_{(n)}(X_{(n)})$. For a single operator $A$, its spectrum $\s(A)$ is precisely the set of singularities
of the Cauchy kernel or resolvent $\lambda\longmapsto (\lambda I-A)^{-1}$ for $\lambda\in \C$, that is, the set of $\lambda\in \C$ for which $\lambda I-A$ is not invertible in $\cL(X)$.

A commuting $n$-tuple $\bs A =
(A_1,\dots,A_n)$ of bounded selfadjoint operators on a Hilbert space $H$ has a ready-made
functional calculus given by formula (\ref{eqn:fc}). The support of the joint spectral measure $P_{\bs A}$
is naturally interpreted as the joint spectrum $\s(\bs A)$ of $\bs A$. The observation that
$\s(\bs A)$ is actually the Gelfand spectrum of the commutative $C^*$-algebra generated by $\bs A$ lends
credence to the interpretation of $\s(\bs A)$ as the joint spectrum. By setting
$G_\om(\bs A) = \int_{\s(\bs A)}G_\om(\lambda)\,dP_{\bs A}(\lambda)$ for all $\om\in \R^{n+1}\setminus (\{0\}
\times \s(\bs A))$, it is easy to check by the vector valued version of Fubini's Theorem, that
with the assumptions of formula (\ref{eqn:fc}) below for the left monogenic function $f:U\to\Cn$ and the open set $\Omega$, the equalities
\begin{eqnarray*}
\int_{\partial \Omega}G_\om(\bs A)\bs n(\om)f(\om)\,d\mu(\om) &=& 
\int_{\partial \Omega}\left(\int_{\s(\bs A)}G_\om(\lambda)\,dP_{\bs A}(\lambda)\right)\bs n(\om)f(\om)\,d\mu(\om)\\
&=& \int_{\s(\bs A)}\left(\int_{\partial \Omega}G_\om(\lambda)\bs n(\om)f(\om)\,d\mu(\om)\right)\,dP_{\bs A}(\lambda)\\
&=& \int_{\s(\bs A)} f(\lambda)\,dP_{\bs A}(\lambda)\\
&=&f(\bs A)
\end{eqnarray*}
hold. Furthermore $\g(\bs A) = \s(\bs A)$.

\subsection{The Weyl calculus}
There is an operator valued distribution $\cW_{\bs A}$ on $\R^n$ associated with any $n$-tuple $\bs A =
(A_1,\dots,A_n)$ of selfadjoint operators on a Hilbert space $H$. Now the operators $A_1,\dots,A_n$
need not commute with each other. The distribution $\cW_{\bs A}$ is a substitute for the 
joint spectral measure $P_{\bs A}$. If $\bs A$ is a commuting $n$-tuple of bounded selfadjoint
operators, then $\cW_{\bs A}:f\longmapsto \int_{\s(\bs A)}f\,dP_{\bs A}$ for all smooth functions
defined in a neighbourhood of $\s(\bs A)$ in $\R^n$.

Suppose that $T:C^\infty(\R^n)\to\cL(X)$ is an operator valued distribution with compact support supp$(T)$ acting on a Banach space $X$. For any smooth $\Cn$-valued function function $f=\sum_Sf_Se_S$ defined in a neighborhood of supp$(T)$ in $\R^n$, the element $Tf$ of $\cL_{(n)}(X_{(n)})$ is defined by
$Tf=\sum_Se_ST(f_S)$

The Cauchy integral formula (\ref{eqn:CIF}) may be viewed as an equality
$$f = \int_{\pd \Omega}G_\omega\bs n(\omega)f(\omega)\,d\mu(\omega)$$
between smooth $\Cn$-valued functions defined in a neighbourhood $U\cap \R^n$ of the support of $T$
when $f$ is left monogenic on $U$, supp$(T)\subset \Omega$ and $\overline\Omega\subset U$, so that
\begin{eqnarray*}
Tf &=& T\int_{\pd \Omega}G_\omega\bs n(\omega)f(\omega)\,d\mu(\omega)\\
&=& \int_{\pd \Omega}T(G_\omega)\bs n(\omega)f(\omega)\,d\mu(\omega).
\end{eqnarray*}
The last inequality is a property of the Bochner integral for functions with
values in the Fr\'echet space $C^\infty_{(n)}(\R^n)$. The $\cL_{(n)}(X_{(n)})$-valued function $\om\longmapsto T(G_\omega)$
is left and right monogenic in $\R^{n+1}$ away from supp$(T)$. If $\varphi$ is a smooth function with compact support in a neighbourhood of supp$(T)$, $x\in X$ and $\xi\in X'$, then according to \cite[Theorem 27.7]{BDS},
$$\langle (T\varphi)x,\xi\rangle = \lim_{t\to0+}\int_{\R^n}\langle \left(T(G_{u+te_0})-T(G_{u-te_0})\right) x,\xi\rangle\varphi(u)\,du,$$
which may be compared with equation (\ref{eqn:Brem}).

For any $n$-tuple $\bs A =
(A_1,\dots,A_n)$ of bounded selfadjoint operators on a Hilbert space $H$, the Weyl functional calculus
is the $\cL(H)$-valued distribution
\begin{equation}\label{eqn:Weyl}
\cW_{\bs A} = \frac1{(2\pi)^n}\left(e^{i\langle \xi, \bs A\rangle}\right)\hat{}\phantom{|}.
\end{equation}
The operator $\langle \xi, \bs A\rangle= \langle \bs A,\xi\rangle= \sum_{j=1}^n \xi_jA_j$ is selfadjoint for each $\xi\in\R^n$ and the Fourier transform is taken with respect $\xi$ in the sense of distributions. If $\bs A$ is a commuting system, then the distribution $\cW_{\bs A}$ is integration with respect to the joint spectral measure $P_\bs A$.

Setting
$G_\om(\bs A) = \cW_{\bs A}(G_\omega\restriction\R^n)$ for $\om\in \R^{n+1}\setminus(\{0\}\times\hbox{supp}(\cW_{\bs A}))$, the equality $f(\bs A) = \cW_{\bs A}(f\restriction\R^n)$ holds
for the element $f(\bs A)$ of $\cL_{(n)}(H_{(n)})$ defined by formula (\ref{eqn:fc}) below and $\g(\bs A)= \hbox{supp}(\cW_{\bs A})$. E. Nelson \cite{Nel} has identified $\g(\bs A)$ with the Gelfand spectrum
of a certain commutative Banach algebra.

\begin{xmp}\label{xmp:3.1.1} Let $n = 3$ and consider the simplest noncommuting example of the Pauli
matrices,  
\begin{equation}\label{eq:Paulim}
\sigma_{1} = \left(\begin{array}{cc} 0&1\\ 1&0\end{array}\right),\qquad \sigma_{2} =
\left(\begin{array}{cc} 0&{- i}\\ {i}&0\end{array}\right),\qquad \sigma_{3} = \left(\begin{array}{cc} 1&0\\ 0&{-1}\end{array}\right),
\end{equation}
viewed as linear transformations acting on $H = \C^2$. Set $\bs\sigma = (\sigma_1,\sigma_2,\sigma_3)$. Then
$\langle \xi, \bs \s\rangle^2=|\xi|^2 I$, so the exponential series gives
$$e^{it\langle \xi, \bs \s\rangle} =\cos (t|\xi|) I + i\langle \xi, \bs \s\rangle\frac{\sin(t|\xi|)}{|\xi|},\quad t\in\R.$$
Because $(2\pi)^{-3}({\sin(t|\xi|)}/|\xi|)\,\hat{} = t\mu_t$, with $\mu_t$ the unit surface measure on the sphere
$tS^{2}$ of radius $t >0$ centred at zero in $\R^3$ and 
$$\cos(t|\xi|)= \frac{d}{ds}\,\frac{\sin(s|\xi|)}{|\xi|}\bigg|_{s=t},$$
for each $f\in C^\infty(\R^3)$, the matrix $\cW_{t\bs \s}(f)$ is
given by 
\begin{equation}\label{eq:Pauli}
\cW_{t\bs \s}(f) = I\int_{tS^{2}}\left(f + \bs n\cdot\nabla f\right)\,d\mu_t +
t\int_{tS^{2}}{\bs\sigma}\cdot\nabla f\,d\mu_t,\quad t> 0.
\end{equation}
 Here  $\bs n(x)$ is
the outward unit normal at $x\in S^{2}$. Thus, $\supp\left(\cW_{\bs \s}\right) = S^{2}$. 

For all $\omega =\om_0+\bs\om \in\R^4$ such that $\bs\om \notin S^{2}\subset \R^3$, the Cauchy kernel 
$G_\omega(\bs\s)\in \cL(\C^2)_{(3)}$
is given by  
$$G_\omega(\bs\s)=\cW_{\bs \s}(G_\omega ) = I\int_{S^{2}}\left(G_\omega  + \bs n\cdot\nabla G_\omega \right)\, d\mu +
\int_{S^{2}}{\bs\sigma}\cdot\nabla G_\omega  d\mu.$$
\end{xmp}

By the operator valued version of the Paley-Wiener Theorem (see \cite[Theorem 7.23]{Rud} for the scalar version), the distribution (\ref{eqn:Weyl}) exists
for $n$-tuple $\bs A =
(A_1,\dots,A_n)$ of bounded linear operators on a Banach space $X$ provided that
the exponential growth estimate
\begin{equation}\label{eqn:exp}
\left\|e^{i\langle \z, \bs A\rangle}\right\|_{\cL(X)} \le C(1+|\xi|)^se^{r|\eta|},\quad \z=\xi+i\eta,\ \xi,\eta\in \R^n,
\end{equation}
for $\langle \z, \bs A\rangle=\sum_{j=1}^n \z_jA_j$ holds, for some positive numbers $C,s,r$ independent of $\z\in\C^n$. Then supp$(\cW_{\bs A})$ is contained in the ball of radius $r > 0$ centred at zero in $\R^n$.
The estimate (\ref{eqn:exp}) holds if $\bs A$ is finite system of simultaneously triangularisable matrices with real eigenvalues \cite[Theorem 5.10]{J}.

The Weyl functional calculus $\cW_{\bs A}$ has the property that operator products are 
\textit{symmetrically} ordered. For an $n$-tuple $\bs A =
(A_1,\dots,A_n)$ of bounded selfadjoint operators on a Hilbert space  $H$, other choices of operator ordering define an operator valued distribution $\cF_{\bs A,\bs \mu}$. The weighting for operator products is determined by an $n$-tuple $\bs\mu=(\mu_1,\dots,\mu_n)$ of continuous Borel probability measures on $[0,1]$, see \cite[Chapter 7]{J}.

The collection of single operators $A$ satisfying the bound (\ref{eqn:exp}) is called the class of \textit{generalized scalar operators} and these have been extensively studied \cite{CF}.

\subsection{Plane wave decomposition of the Cauchy kernel}  The exponential growth estimate (\ref{eqn:exp}) leads to a $C^\infty$-functional calculus $\cW_\bs A$, so it is desirable to have a condition weaker than (\ref{eqn:exp}) for which the Cauchy kernel can be defined in a way that agrees with the preceding definition.

It turns out that it suffices for $\bA$ to be \textit{hyperbolic} to make sense of the Cauchy kernel $G_\om(\bs A)$ in the
Cauchy integral formula (\ref{eqn:fc}). For \textit{matrices}, this is equivalent to the bound (\ref{eqn:exp}), because 
as noted in the Introduction it says that
$$I\frac{\pd}{\pd t} + \sum_{j=1}^nA_j\frac{\pd}{\pd x_j} $$
is a \textit{hyperbolic differential operator} on $\R^{n+1}$.

The key to constructing the Cauchy kernel $G_\om(\bs A)$  for a hyperbolic $n$-tuple $\bs A$ of bounded linear operators  is the plane wave decomposition of the fundamental solution 
$$E:x\longmapsto \frac{1}{\Sigma_n}\frac{\overline x}{|x|^{n+1}},\quad x\in \R^{n+1}\setminus\{0\},$$ 
of the generalised Cauchy-Riemann operator $D=\sum_{j=0}^ne_j\pd_j$. The plane wave decomposition of $E$ was first
given by F. Sommen \cite{Som} and is most simply realised with the proof of Li, McIntosh, Qian \cite{LMQ} using
Fourier analysis. 
The unit hypersphere $S^{n-1}$ in $\R^n$ is the set $\{s\in\R^n: |s|=1\,\}$.

The Fourier transform of $u\in L^1\cap L^2(\R^n)$ is $\hat u(\xi)=\int_{\R^n}e^{-i\langle x,\xi\rangle}u(x)\,dx$
and the inverse map $u(x) = (2\pi)^{-n}\int_{\R^n}e^{i\langle x,\xi\rangle}\hat u(\xi)\,d\xi$ recovers $u$ from $\hat u$ when $\hat u\in L^1\cap L^2(\R^n)$. For any $\xi\in \R^n$, the linear function $\bs x\longmapsto i\langle \bs x,\xi\rangle$, $\bs x\in \R^n$, defined in $\R^n$ extends monogenically to $\R^{n+1}$ to the function
$$x\longmapsto i\langle \bs x,\xi\rangle e_0 -i\bs x x_0,\quad x=x_0e+\bs x, x_0\in\R,\ \bs x\in \R^{n}.$$
According to the functional calculus for the selfadjoint element $i\bs x$ of $\Cn$,
the unique monogenic extension of the function $\bs x \longmapsto e^{i\langle \bs x,\xi\rangle}$, $\bs x\in\R^n$, is given by
\begin{eqnarray*}
\exp(i\langle \bs x,\xi\rangle e_0 -i\bs x x_0) &=& e^{i\langle \bs x,\xi\rangle-|\bs x|x_0}\xchi_+(\bs x) +
e^{i\langle \bs x,\xi\rangle+|\bs x|x_0}\xchi_-(\bs x)\\
&=& e_+(x,\xi)+e_-(x,\xi)
\end{eqnarray*}
for $x=x_0e+\bs x$ with  $x_0\in\R$ and $\bs x\in \R^{n}$.
\begin{thm} Let $x = x_0e_0+\bs x$ be an element of $\R^{n+1}$
with $\bs x\in\R^n$. If $x_0>0$, then
\begin{equation}\label{eqn:PW1}
E(x) = 
\frac{(n-1)!}{ 2}\left(\frac{i}{
2\pi}\right)^n\int_{S^{n-1}}(e_0+is)\left(\langle {\bs x},s\rangle-x_0 s\right)^{-n}\/ds.
\end{equation}
If $x_0 < 0$, then
\begin{equation}\label{eqn:PW2}
E(x) = 
(-1)^{n+1}\frac{(n-1)!}{ 2}\left(\frac{i}{
2\pi}\right)^n\int_{S^{n-1}}(e_0+is)\left(\langle {\bs x},s\rangle-x_0 s\right)^{-n}\/ds.
\end{equation}
\end{thm}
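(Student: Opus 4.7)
The strategy, following Li, McIntosh, and Qian, is to derive the plane wave formulas from the Fourier integral representation
\begin{equation*}
E(x) \;=\; \frac{1}{(2\pi)^n}\int_{\R^n} e^{i\langle\bs x,\xi\rangle - x_0|\xi|}\,\xchi_+(\xi)\,d\xi,\qquad x_0>0.
\end{equation*}
To prove this identity, I would split $\xchi_+(\xi) = \tfrac{1}{2}(e_0 + i\xi/|\xi|)$ into its scalar and vector parts. The scalar part gives half the inverse Fourier transform of $e^{-x_0|\xi|}$, which is the Poisson kernel $\tfrac{2}{\Sigma_n}\,x_0/|x|^{n+1}$; halved, this matches the $e_0$-component $x_0/(\Sigma_n|x|^{n+1})$ of $E(x)=\overline{x}/(\Sigma_n|x|^{n+1})$. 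The vector part reduces to the integrals $\int e^{i\langle\bs x,\xi\rangle - x_0|\xi|}\,\xi_j/|\xi|\,d\xi$, which I evaluate by relating them via $\partial_{x_0}$ to the Poisson integral and its $\partial_{x_j}$ derivatives; integration in $x_0$ yields $2i(2\pi)^n x_j/(\Sigma_n|x|^{n+1})$, and after assembling into the vector part gives exactly $-\bs x/(\Sigma_n|x|^{n+1})$, matching the vector component of $E$.

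Next, I pass to polar coordinates $\xi = rs$ with $r\geq 0$ and $s\in S^{n-1}$. Since $\xchi_+(rs)=\xchi_+(s)$, the radial integral is $\int_0^\infty e^{-r(x_0-i\langle\bs x,s\rangle)}r^{n-1}\,dr = (n-1)!/(x_0-i\langle\bs x,s\rangle)^n$, convergent for $x_0 > 0$. The spectral identity $is\cdot\xchi_+(s) = \xchi_+(s)$, equivalent to $s\,\xchi_+(s) = -i\,\xchi_+(s)$, now gives
\begin{equation*}
\xchi_+(s)\,(\langle\bs x,s\rangle - x_0 s)^{-n} \;=\; (\langle\bs x,s\rangle + ix_0)^{-n}\xchi_+(s) \;=\; (-i)^n(x_0-i\langle\bs x,s\rangle)^{-n}\xchi_+(s).
\end{equation*}
Combined with $e_0 + is = 2\xchi_+(s)$ and $i^n(-i)^n = 1$, the constants $\tfrac{(n-1)!}{2}(i/2\pi)^n\cdot 2\cdot(-i)^n\cdot(2\pi)^n/(n-1)!$ collapse to $1$, and (\ref{eqn:PW1}) follows.

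For $x_0<0$ one repeats with $\xchi_-(\xi)$ and $e^{x_0|\xi|}$, but the Fourier-Poisson computation now yields a scalar part $-x_0/(\Sigma_n|x|^{n+1})$, whereas the $e_0$-component of $E(x)$ is $x_0/(\Sigma_n|x|^{n+1})$, so one obtains $E(x) = -\frac{1}{(2\pi)^n}\int e^{i\langle\bs x,\xi\rangle + x_0|\xi|}\xchi_-(\xi)\,d\xi$. Polar coordinates together with $s\,\xchi_-(s) = i\,\xchi_-(s)$ produce a spherical integral of $\xchi_-(s)(\langle\bs x,s\rangle - x_0 s)^{-n}$. The substitution $s\mapsto -s$ converts $\xchi_-(s)$ into $\xchi_+(s)$ and multiplies the integrand by $(-1)^n$; combined with the initial sign, the overall prefactor becomes $(-1)^{n+1}$, giving (\ref{eqn:PW2}). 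The main obstacle is the careful bookkeeping of signs and the noncommutative rearrangements in the Clifford algebra $\Cn$: once the spectral identity $s\,\xchi_\pm(s) = \mp i\,\xchi_\pm(s)$ and the correct sign of the Fourier-Poisson representation of $E$ are in place, the remaining Gamma-integral evaluation and Riesz-type computations are routine.
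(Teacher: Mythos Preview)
Your proposal is correct and follows precisely the Fourier-analytic route of Li, McIntosh and Qian that the paper itself invokes; the paper does not give its own proof but merely cites \cite{Som} and \cite{LMQ}, so your argument is a faithful fleshing-out of the reference. The key ingredients you identify---the Fourier representation $E(x)=(2\pi)^{-n}\int e_+(x,\xi)\,d\xi$ for $x_0>0$ verified via the Poisson kernel, the Gamma-integral in the radial variable, and the spectral identities $s\,\xchi_\pm(s)=\mp i\,\xchi_\pm(s)$ together with the substitution $s\mapsto -s$ for the sign $(-1)^{n+1}$---are exactly the mechanism behind the result, and your bookkeeping of the constants checks out.
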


Now suppose that $\bs A$ is a hyperbolic  $n$-tuple  of bounded linear operators on a Banach space $X$. Then
$\langle {\bs A},s\rangle-x_0 s I$ is an element of the space $\cL_{(n)}(X_{(n)})$ of module homomorphisms for each $s\in S^{n-1}$.
When $x_0\neq 0$ and $a\in\R$, the inverse of $(aI-\langle {\bs A},s\rangle)e_0-x_0 s I$ in the Clifford module  $\cL_{(n)}(X_{(n)})$ is given by
$$((aI-\langle {\bs A},s\rangle)e_0-x_0 s I)^{-1} = \left((aI-\langle {\bs A},s\rangle)+x_0 s I\right)((aI-\langle {\bs A},s\rangle)^2+x_0^2I)^{-1}.$$
Because $\s(\langle \xi, \bs A\rangle)\subset \R$, the Spectral Mapping Theorem ensures that
the bounded linear operator $(aI-\langle {\bs A},s\rangle)^2+x_0^2I$ is invertible and
$$\s((aI-\langle {\bs A},s\rangle)^2+x_0^2I) = \varphi_a(\s(\langle s,{\bs A}\rangle))$$
for the function $\varphi_a:t\longmapsto (a-t)^2+x_0^2$, $t\in\R$. Moreover,
$$((aI-\langle {\bs A},s\rangle)e_0-x_0 s I)^{-n} = \left((aI-\langle {\bs A},s\rangle)e_0+x_0 s I\right)^n((aI-\langle {\bs A},s\rangle)^2+x_0^2I)^{-n}$$
 in $\cL_{(n)}(X_{(n)})$. To define $G_x(\bs A)$, the promised substitution $\bs y \longrightarrow\bs A$
in the Cauchy kernel $G_x(\bs y )=E(x-\bs y )$ is now made by setting
\begin{equation}\label{eqn:CK1}
G_x(\bs A) = 
\frac{(n-1)!}{ 2}\left(\frac{i}{
2\pi}\right)^n\int_{S^{n-1}}(e_0+is)\left((\langle {\bs x},s\rangle-x_0 s)I-\langle {\bs A},s\rangle\right)^{-n}\/ds
\end{equation}
for $x= x_0e_0+\bs x$ with $\bs x\in \R^n$ and $x_0 > 0$, and
\begin{equation}\label{eqn:CK2}
G_x(\bs A) = (-1)^{n+1}
\frac{(n-1)!}{ 2}\left(\frac{i}{
2\pi}\right)^n\int_{S^{n-1}}(e_0+is)\left((\langle {\bs x},s\rangle-x_0 s)I-\langle {\bs A},s\rangle\right)^{-n}\/ds
\end{equation}
for $x_0 < 0$.
The expression $\left((\langle {\bs x},s\rangle-x_0 s)I-\langle {\bs A},s\rangle\right)^{-n}$
is left and right monogenic in $\cL_{(n)}(X_{(n)})$ for the variable $x= x_0e_0+\bs x$ with $x_0\ne 0$, so
differentiating under the integral sign shows that $x\longmapsto G_x(\bs A)$ is itself two sided monogenic for $x_0\ne 0$. 

Note that for $n$ even, symmetry in the integral gives
\begin{align*}
&\int_{S^{n-1}}(e_0+is)\left((\langle {\bs x},s\rangle-x_0 s)I-\langle {\bs A},s\rangle\right)^{-n}\/ds
= e_0\int_{S^{n-1}}\left((\langle {\bs x},s\rangle-x_0 s)I-\langle {\bs A},s\rangle\right)^{-n}\/ds\\
&\qquad=
e_0\int_{S^{n-1}}\left((\langle {\bs x},s\rangle+x_0 s)I-\langle {\bs A},s\rangle\right)^{n}\left((\langle {\bs x},s\rangle I-\langle {\bs A},s\rangle)^2+ x_0^2I\right)^{-n}\/ds
\end{align*}
and for $n$ odd
\begin{align*}
&\int_{S^{n-1}}(e_0+is)\left((\langle {\bs x},s\rangle-x_0 s)I-\langle {\bs A},s\rangle\right)^{-n}\/ds
= i\int_{S^{n-1}}s\left((\langle {\bs x},s\rangle-x_0 s)I-\langle {\bs A},s\rangle\right)^{-n}\/ds\\
&\qquad=
i\int_{S^{n-1}}\left((\langle {\bs x},s\rangle+x_0 s)I-\langle {\bs A},s\rangle\right)^{n}\left((\langle {\bs x},s\rangle I-\langle {\bs A},s\rangle)^2+ x_0^2I\right)^{-n}\/s\,ds.
\end{align*}
\smallskip

\subsection{The McIntosh functional calculus} For a general matrix or operator $A$, the Riesz-Dunford formula
\begin{equation}\label{eqn:RD}
f(A) = \frac{1}{2\pi i}\int_{C}(\lambda I-A)^{-1}f(\lambda)\,d\lambda
\end{equation}
is valid for all functions $f$ holomorphic in a neighbourhood of the spectrum $\s(A)$ in the complex plane.
The simple closed contour $C$ surrounds $\s(A)$ and is contained in the domain of $f$.
The higher-dimensional analogue of the Riesz-Dunford formula (\ref{eqn:RD}) is then
\begin{equation}\label{eqn:fc} 
f(\bs A) = \int_{\partial \Omega}G_x(\bs A)\bs n(x)f(x)\,d\mu(x)
\end{equation} for the hyperbolic $n$-tuple $\bs A =
(A_1,\dots,A_n)$ of bounded linear operators on
a Banach space $X$, simply by  substituting the $n$-tuple $\bs A$ for the vector $x\in\{0\}\times \R^n$
in the Cauchy integral formula (\ref{eqn:CIF}). The price
paid is that Clifford regular functions have values in the Clifford algebra
$\C_{(n)}$, which is noncommutative for $n=2,3,\dots$ and the correspondence between Clifford regular functions and their holomorphic counterparts needs to be investigated. The algebra $\R_{(1)}$
is isomorphic to $\C$ and formulas (\ref{eqn:fc}) and  (\ref{eqn:RD}) coincide
in the case $n=1$.

Let $\bs A$ is a hyperbolic  $n$-tuple  of bounded linear operators on a Banach space $X$ so that $G_\cdot(\bs A)$ is defined by the plane wave decomposition.
If $x\longmapsto G_x(\bs A)$ has a continuous extension to a neighborhood in $\R^{n+1}$ of a point $\bs a\in \R^n$,
then $G_\cdot(\bs A)$ is actually monogenic in a neighborhood of $\bs a$ in $\R^{n+1}$ by the monogenic analogue of Painlev\'e's Theorem \cite[Theorem 10.6 p. 64]{BDS}. 

The \textit{joint monogenic spectrum} $\g(\bs A)$ of the $n$-tuple $\bs A$
is the subset of $\R^n$ for which $\{0\}\times\g(\bs A)$ is the set of singularities of the
Cauchy kernel $G_\cdot(\bs A)$. In the case that the $n$-tuple $\bs A$ satisfies the exponential growth estimates (\ref{eqn:exp}), the joint spectrum $\g(\bs A)$ coincides with supp$(\cW_{\bs A})$ \cite[Theorem 4.8]{J}. In the case that 
the $n$-tuple $\bs A$ consists of bounded selfadjoint operators, $\g(\bs A)$ equals the Gelfand spectrum of a certain commutative Banach algebra (operants) associated with $\bs A$ \cite{Nel}. 
The monogenic analogue of Liouville's Theorem ensures that the {joint spectrum} $\g(\bs A)$
is nonempty and compact \cite[Theorem 4.16]{J}. 

The \textit{joint spectral radius} $r(\bs A) =\sup\{|x|:x\in\g(\bA)\,\}$ of the hyperbolic $n$-tuple $\bA$ is the
radius of the smallest ball centred at zero containing $\g(\bA)$.

The joint spectrum $\g(\bs A)$ is the analogue of the spectrum $\s(A)$ of a single operator $A$
in the sense that it is the set of singularities of the Cauchy kernel $\lambda\longmapsto (\lambda I -A)^{-1}$
in the Riesz-Dunford formula (\ref{eqn:RD}), that is, the set of $\lambda \in \C$ for which 
$\lambda I -A$ is not invertible --- the first is the analytic viewpoint and the second is the algebraic viewpoint.

Given a left monogenic function $f$ defined in a neighbourood $U$ of the joint spectrum $\g(\bs A)$,
the element $f(\bs A)$ of $\cL_{(n)}(X_{(n)})$ is defined by formula $(\ref{eqn:fc})$ independently
of the oriented $n$-manifold $\pd\Omega$ such that $\g(\bs A)\subset \Omega$ and $\overline \Omega\subset U$.

A real analytic function $f:V\to\C$ defined in a neighbourhood $V$ of $\g(\bs A)$ in $\R^n$ has a unique two-side monogenic extension $\tilde f$ (the Cauchy-Kowaleski extension) to a neighborhood $U$ of $\{0\}\times\g(\bs A)$ in $\R^{n+1}$. The extension is provided by an expansion in a series of monogenic polynomials \cite{BDS}. Then 
the definition $f(\bs A):=\tilde f(\bs A)$ makes sense and does not depend on the domain $U$ of $\tilde f$
containing $\{0\}\times\g(\bs A)$. 

It is important to know that $f(\bs A)\in \cL(X)$ (where $\cL(X)\equiv \cL(X)e_0$) and what 
the bounded linear operator $p(\bs A)\in\cL(X)$ is in the case that $p$ is a polynomial in $n$ real variables.
The following results are taken from \cite[\S 4.3]{J}.

\begin{thm}\label{thm:3.3.1} Let $\bs A$ be a hyperbolic $n$-tuple of bounded operators acting on a
Banach space $X$. 

\begin{enumerate}[label ={\rm (\roman*)}]
\item  Let $p:\C\to\C$ be a polynomial and $\zeta\in\C^n$. Set $f(z) =
p(\langle z,\zeta\rangle )$, for all $z\in\C^n$. Then  $f(\bs A) = p(\langle \bs A,\zeta\rangle ).$
\item  Suppose that $k_1,\dots,k_n = 0,1,2,\dots, k = k_1+\cdots+k_n$ and
$f(x) = x_1^{k_1}\cdots x_n^{k_n}$ for all $x = (x_1,\dots,x_n)\in\R^n$. Then
$$f(\bs A) = \frac{k_1!\cdots k_n!}{ k!}\sum_\pi A_{\pi(1)}\cdots A_{\pi(k)},$$ where
the sum is taken over every map $\pi$ of the set $\{1,\dots,k\}$ into
$\{1,\dots,n\}$ which assumes the value $j$ exactly $k_j$ times, for each $j
=1,\dots,n$.
\item  Let $\Omega$ be an open set in $\R^{n+1}$ containing $\g(\bs A)$ with a smooth
boundary $\pd\Omega$. Then for all $\omega\notin\overline{\Omega}$, 
$G_\omega(\bs A) = \int_{\pd\Omega} G_\zeta(\bs A)\bs n(\zeta)G_\omega(\zeta)\,d\mu(\zeta).$
\item  Suppose that $U$ is an open neighbourhood of $\g(\bs A)$ in $\R^n$ and
$f:U\to\C$ is an analytic function. Then $f(\bs A)\in\cL(X)$. 
\end{enumerate}
\end{thm}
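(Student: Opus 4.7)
The plan is to establish the four parts in the logical order (i) $\Rightarrow$ (ii) $\Rightarrow$ (iv), treating (iii) separately as an immediate corollary of the definition of the functional calculus applied to the Cauchy kernel itself.

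For part (i), my approach is to reduce the Clifford-valued calculus to the classical Riesz-Dunford calculus for the single bounded operator $B = \langle \bs A,\zeta\rangle$ (with $\zeta \in \C^n$ and complexification handled componentwise). Because $\bs A$ is hyperbolic, $\s(\langle \bs A,\xi\rangle)\subset\R$ for real $\xi$, so $B$ is a bounded operator whose spectrum lies in a thin strip controlled by $\Im\zeta$. I would start by proving the identity for $p(t) = e^{ist}$, $s\in\R$, by a direct computation: substitute the plane wave formula (\ref{eqn:CK1})/(\ref{eqn:CK2}) for $G_x(\bs A)$ into (\ref{eqn:fc}), use that the Cauchy integral reproduces exponentials via their monogenic extension, and collapse the $S^{n-1}$-integral. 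Once the exponential case is settled, polynomials in $\langle z,\zeta\rangle$ follow either by polynomial interpolation of exponentials (differentiating in $s$ at $s=0$) or by an application of the classical Riesz-Dunford formula with a contour encircling $\s(B)$.

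For part (ii), apply (i) with the one-variable polynomial $p(t) = t^k$, $k = k_1+\cdots+k_n$, and the indeterminate $\zeta = (\zeta_1,\dots,\zeta_n)$. The left side, expanded by the multinomial formula, reads
\[
\langle z,\zeta\rangle^k = \sum_{k_1+\cdots+k_n=k} \frac{k!}{k_1!\cdots k_n!}\,z_1^{k_1}\cdots z_n^{k_n}\,\zeta_1^{k_1}\cdots\zeta_n^{k_n},
\]
while the right side, using linearity of $f\mapsto f(\bs A)$ on polynomials in $z$ with parameter $\zeta$, is
\[
\langle \bs A,\zeta\rangle^k = \sum_\sigma A_{\sigma(1)}\cdots A_{\sigma(k)}\,\zeta_{\sigma(1)}\cdots\zeta_{\sigma(k)},
\]
the sum being over all maps $\sigma\colon\{1,\dots,k\}\to\{1,\dots,n\}$. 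Identifying coefficients of the monomial $\zeta_1^{k_1}\cdots\zeta_n^{k_n}$ on both sides and using that the number of $\sigma$'s of fixed multiplicity $(k_1,\dots,k_n)$ equals $k!/(k_1!\cdots k_n!)$ yields the required symmetrization formula.

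For part (iii), I view the right-hand side as $f(\bs A)$ defined by (\ref{eqn:fc}) applied to the scalar-valued left monogenic function $f(\zeta) = G_\omega(\zeta)$, which is genuinely monogenic in $\zeta$ on all of $\R^{n+1}\setminus\{\omega\}$ and hence on a neighbourhood of $\overline\Omega$ since $\omega\notin\overline\Omega$. The claim $f(\bs A)=G_\omega(\bs A)$ then amounts to the assertion that the Cauchy kernel $\zeta\mapsto G_\zeta(\bs A)$ itself obeys the functional calculus definition, which is a standard ``reproducing'' fact, provable either by expanding $G_\omega(\zeta)$ in a neighbourhood of $\g(\bs A)$ in a series of monogenic polynomials (the Cauchy--Kowalewski series) and applying (ii), or directly by invoking the deformation of contour property following from $G_\cdot(\bs A)$ being two-sided monogenic off $\{0\}\times\g(\bs A)$.

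Finally, (iv) follows from (ii) by approximation: polynomials in $n$ real variables $p(x)$ extend uniquely to monogenic polynomials $\tilde p$, and for each monomial, (ii) gives $\tilde p(\bs A)\in\cL(X)e_0\equiv\cL(X)$ as a symmetric product of the $A_j$'s. A real-analytic function $f$ on $V\supset\g(\bs A)$ has a Cauchy--Kowalewski extension $\tilde f$ given by a series $\sum_k P_k$ of monogenic polynomials converging uniformly on a compact neighbourhood of $\{0\}\times\g(\bs A)$; pairing this uniform convergence with the definition (\ref{eqn:fc}), which is continuous in $f$ with respect to uniform convergence on $\pd\Omega$, gives $\tilde f(\bs A) = \sum_k P_k(\bs A)\in\cL(X)$ since each summand lies in $\cL(X)$ and $\cL(X)$ is closed in $\cL_{(n)}(X_{(n)})$.

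The main obstacle I anticipate is the exponential case in (i), where one must carefully justify the reduction of the $S^{n-1}$ plane wave integral to a single-variable Cauchy integral in the spectrum of $\langle \bs A,\zeta\rangle$, including the treatment of both signs of $x_0$ in (\ref{eqn:CK1})/(\ref{eqn:CK2}) and the boundary deformation; the remaining pieces (ii)--(iv) are then essentially combinatorial and approximation arguments built on top of (i).
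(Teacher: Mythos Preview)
The paper does not give its own proof of this theorem: it is quoted verbatim from \cite[\S 4.3]{J} with the remark ``The following results are taken from \cite[\S 4.3]{J}'', so there is no in-paper argument to compare against. Your logical organisation (i) $\Rightarrow$ (ii) $\Rightarrow$ (iv), with (iii) handled separately via the reproducing property, is the standard one, and your arguments for (ii), (iii) and (iv) are correct as sketched: the multinomial coefficient-matching for (ii) is exactly right, the reproducing identity for (iii) follows from the monogenic polynomial expansion of $G_\omega(\cdot)$ for large $|\omega|$ together with analytic continuation, and the Cauchy--Kowalewski series argument for (iv) is the intended one.

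Your route to (i), however, takes an unnecessary detour through the exponentials $p(t)=e^{ist}$. This is workable in principle, but it front-loads exactly the difficulty you flag as the ``main obstacle'': you must first make sense of the McIntosh calculus applied to the monogenic extension of $e^{is\langle z,\zeta\rangle}$, collapse the $S^{n-1}$-integral to a one-variable contour, and then differentiate in $s$---all to recover a statement about polynomials. Note too that for a merely hyperbolic $n$-tuple on an infinite-dimensional Banach space the exponential bound (\ref{eqn:exp}) need not hold, so you cannot borrow intuition from the Weyl calculus here; everything must go through the plane wave formula. The cleaner and more direct approach (and the one used in \cite{J}) is to expand the Cauchy kernel $G_x(\bs A)$ for $|x|$ large as a series in the monogenic monomials $V_\alpha(x)$, identify the coefficients as the symmetrised operator products, and then read off (i) for $p(t)=t^m$ immediately. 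This avoids exponentials entirely and makes the subsequent coefficient comparison in (ii) a tautology rather than a deduction.
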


For a commuting $n$-tuple $\bs A$ of bounded operators with real spectra, the McIntosh functional calculus
$f\longmapsto f(\bs A)$ given by formula $(\ref{eqn:fc})$ coincides with Taylor's functional calculus
$\tilde f\longmapsto \tilde f(\bs A)$ for the holomorphic counterpart $\tilde f:\tilde U\to \C$ of the 
monogenic function $f:U\to \R^{n+1}$, that is, $\tilde U$ is an open subset of $\C^n$ containing 
$\g(\bs A)$ and $U$ is an open subset of $\R^{n+1}$ containing 
$\{0\}\times \g(\bs A)$ such that $\tilde f(\bs x) = f(\bs x)$ for every element $\bs x$
of the open subset $(\tilde U\cap\R^n)\cap \pi_{\R^n}(U\cap(\{0\}\times\R^n)$ of $\R^n$.
Here $\pi_{\R^n}$ is the projection $\pi_{\R^n}(x_0,x_1,\dots,x_n)= (x_1,\dots,x_n)$ for $x_1,\dots,x_n\in\R$.

\begin{thm} Let $\bs A$ be a commuting $n$-tuple  of bounded operators
acting on a Banach space $X$ such that $\s(A_j) \subset \R$ for all $j=1,\dots,n$. 

Then
$\g(\bs A)$ is the complement in $\R^n$ of the set of all
$\lambda\in\R^n$ for which the operator $\sum_{j=1}^n(\lambda_jI-A_j)^2$ is invertible in
$\cL(X)$. 

Moreover,
$\g(\bs A)$ is the Taylor spectrum of $\bs A$. If the complex valued function $f$ is
real analytic in a neighbourhood of
$\g(\bs A)$ in $\R^n$, then the operator $f(\bs A)\in \cL(X)$ coincides with the operator
obtained from Taylor's functional calculus {\rm \cite{Vascu}}.
\end{thm}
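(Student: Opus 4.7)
The plan is to handle the three assertions in order, leveraging the explicit Cauchy kernel (\ref{eqn:3.19}) available in the commuting case and the Spectral Mapping Theorem for Taylor's functional calculus.

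First I would identify $\g(\bs A)$ as the non-invertibility locus of $\sum_{j=1}^n(\lambda_jI-A_j)^2$. By definition $\{0\}\times\g(\bs A)$ is the singularity set of $\om\longmapsto G_\om(\bs A)$, and formula (\ref{eqn:3.19}) shows that $G_\om(\bs A)$ is well-defined and two-sided monogenic on the open set where the positive operator $\om_0^2I+\sum_{j=1}^n(\om_jI-A_j)^2$ is invertible in $\cL(X)$. Hence $\g(\bs A)$ is contained in the stated complement. Conversely, on $\{\om_0=0\}$ this operator reduces to $\sum_j(\om_jI-A_j)^2$, and if it is not invertible at $\bs\lambda$ then (by perturbation and the explicit form of the kernel) $G_\om(\bs A)$ blows up as $\om\to (0,\bs\lambda)$, so $(0,\bs\lambda)$ is a singularity. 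The kernel cannot extend monogenically through this point by the monogenic Painlev\'e theorem cited above.

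Next I would apply the Spectral Mapping Theorem for Taylor's functional calculus to the polynomial $p(\bs z)=\sum_{j=1}^n(\lambda_j-z_j)^2$ to conclude
$$\s\Bigl(\textstyle\sum_{j=1}^n(\lambda_jI-A_j)^2\Bigr)\ =\ \bigl\{\textstyle\sum_{j=1}^n(\lambda_j-\mu_j)^2 : \bs\mu\in\s_{\rm Taylor}(\bs A)\bigr\}.$$
Because $\bs\mu\in\R^n$ for every $\bs\mu\in\s_{\rm Taylor}(\bs A)$ (a consequence of $\s(A_j)\subset\R$ and commutativity, via the projection property of Taylor's spectrum), the right-hand side contains $0$ exactly when $\bs\lambda\in\s_{\rm Taylor}(\bs A)$. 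Combined with the previous step, this yields $\g(\bs A)=\s_{\rm Taylor}(\bs A)$.

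For the agreement of the two functional calculi on real analytic $f$, I would proceed in two steps. On the dense subalgebra of polynomials, Theorem \ref{thm:3.3.1}(i)(ii) gives the explicit symmetric-product formula $x_1^{k_1}\cdots x_n^{k_n}\longmapsto A_1^{k_1}\cdots A_n^{k_n}$ (the commutativity of $\bs A$ collapses the symmetrisation), matching Taylor's calculus. For a general real analytic $f$ on a neighbourhood $V$ of $\g(\bs A)$, the holomorphic extension $\hat f:\tilde V\to\C$ on a complex neighbourhood of $\s_{\rm Taylor}(\bs A)\subset\C^n$ and the Cauchy-Kowaleski extension $\tilde f$ to $U\subset\R^{n+1}$ are both uniquely determined by $f$; both $\tilde f(\bs A)$ and $\hat f(\bs A)_{\rm Taylor}$ are continuous $X$-valued functionals of $f$ on the Fr\'echet algebra of such real analytic germs, and they agree on polynomials. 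A uniqueness argument for continuous unital homomorphisms from this algebra into $\cL(X)$ extending the polynomial assignment (which itself is determined by $\bs A$) closes the loop.

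The main obstacle is the last step in the Banach space setting, where we lack the clean Fubini manoeuvre available in the Hilbert space selfadjoint case (which used $G_\om(\bs A)=\int_{\s(\bs A)}G_\om(\lambda)\,dP_{\bs A}(\lambda)$). To get around this I would deform the integration manifold $\pd\Om$ in (\ref{eqn:fc}) into $\C^n$, where $G_\om(\bs A)$ can be rewritten through Taylor's resolvent formalism (using the explicit kernel (\ref{eqn:3.19}) continued to complex $\om$), and compare with the Cauchy-Weil representation of $\hat f(\bs A)_{\rm Taylor}$; the agreement on polynomials forces agreement of the integrands after this deformation, giving the result.
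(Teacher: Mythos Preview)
The paper does not supply a proof of this theorem: it is stated as a known result, with the surrounding discussion (around formula~(\ref{eqn:3.19}) and the paragraph preceding the theorem) already asserting its content and the preceding Theorem~\ref{thm:3.3.1} attributed to \cite[\S4.3]{J}. So there is no ``paper's proof'' to compare against, and I comment only on the soundness of your outline.

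Your steps for the first two assertions are essentially right. For the converse in step~1 you should make the blow-up explicit rather than wave at ``perturbation'': with $\bs\om=\bs\lambda$ fixed and $\om_0\to0$, the $e_0$-component of $G_\om(\bs A)$ in (\ref{eqn:3.19}) is $\frac{\om_0}{\Sigma_n}\bigl(\om_0^2I+\sum_j(\lambda_jI-A_j)^2\bigr)^{-(n+1)/2}$; since $\s_{\rm Taylor}(\bs A)\subset\R^n$ gives $\s\bigl(\sum_j(\lambda_jI-A_j)^2\bigr)\subset[0,\infty)$ and contains $0$, the spectral mapping theorem puts $\om_0^{-(n+1)}$ in the spectrum of that power, so the norm is at least $\om_0^{-n}\to\infty$. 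This works for even $n$ too once the fractional power is defined via the single-operator holomorphic calculus for $B(\om)$.

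Step~3 is where your outline wobbles. The appeal to ``uniqueness for continuous unital homomorphisms'' is circular: you would need to know in advance that the McIntosh calculus is multiplicative in the commuting case, which is essentially the conclusion. (Plain density of polynomials plus continuity on a suitable space of germs would suffice without any homomorphism hypothesis, but you do not set up that topology.) More to the point, your diagnosis that the Fubini manoeuvre from the Hilbert case is unavailable here is mistaken. The observation you are missing is that formula~(\ref{eqn:3.19}) \emph{is} Taylor's functional calculus applied componentwise to the $\Cn$-valued analytic function $\bs y\mapsto G_\om(\bs y)$: the homomorphism property of Taylor's calculus turns $\bigl(\om_0^2+\sum_j(\om_j-y_j)^2\bigr)^{-1}$ into $\bigl(\om_0^2I+\sum_j(\om_jI-A_j)^2\bigr)^{-1}$ and so on. Once that is noted, the Hilbert-space computation in the paper transfers verbatim with Taylor's calculus in place of $\int\cdot\,dP_{\bs A}$:
\[
\int_{\pd\Om}G_\om(\bs A)\,\bs n(\om)\tilde f(\om)\,d\mu(\om)
=\Bigl[\int_{\pd\Om}G_\om(\cdot)\,\bs n(\om)\tilde f(\om)\,d\mu(\om)\Bigr](\bs A)_{\rm Taylor}
=\tilde f(\bs A)_{\rm Taylor},
\]
the interchange being justified by the continuity of Taylor's calculus on holomorphic germs. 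Your proposed deformation of $\pd\Om$ into $\C^n$ is unnecessary.
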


In the noncommuting case, there is no homomorphism properties for the McIntosh functional calculus, but it
does enjoy symmetry properties similar to those of the Weyl calculus $\cW_{\bs A}$ when it exists, that is,
when the exponential estimate (\ref{eqn:exp}) obtains.

The following general properties of the Weyl functional calculus \cite[Theorem 2.9]{A1},
suitably interpreted, are also enjoyed by the McIntosh functional calculus.

\begin{thm}\label{thm:monp} Let $\bs A$ be an hyperbolic $n$-tuple of bounded operators acting on a
Banach space $X$.

\begin{enumerate}[label = {\rm (\roman*)}]
\item  {\bf Affine covariance:} if $L:\R^n\to\R^m$ is an affine map, then
$\g(L\vec A)
\subseteq L\g(\vec A)$ and for any function $f$ analytic in a neighbourhood in $\R^m$
of $L\g(\vec A)$, the equality $f(L\vec A) = (f\circ L)(\vec A)$ holds.

\item {\bf Consistency with the one-dimensional calculus:} if $g:\R\to\C$ is
analytic in a neighbourhood of the projection $\pi_1\g(\vec A)$ of $\g(\vec A)$ onto the
first ordinate, and $f=g\circ\pi_1$, then $f(\vec A) = g(A_1)$. We also have
consistency with the $k$-dimensional calculus, $1 < k < n$.

\item  {\bf Continuity:} The mapping
$(T,f)\longmapsto f(T)$ is continuous for $T = \sum_{j=1}^nT_je_j$ from 
$\cL_{(n)}(X_{(n)})\times M(\R^{n+1},\C_{(n+1)})$ to
$\cL_{(n)}(X_{(n)})$ and from $\cL(X)\times H_M(\R^n)$ to $\cL(X)$.

\item  {\bf Covariance of the Range:} If $T$ is an invertible continuous
linear map on $X$ and
$TAT^{-1}$ denotes the
$n$-tuple with entries $TA_jT^{-1}$ for $j=1,\dots n$, then $\g(TAT^{-1}) = \g(\vec A)$
and 
$f(TAT^{-1}) = Tf(\vec A)T^{-1}$ for all functions $f$ analytic in a neighbourhood of
$\g(\vec A)$ in $\R^n$.
\end{enumerate}
\end{thm}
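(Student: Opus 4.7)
The plan is to derive each of the four properties from the integral representation $(\ref{eqn:fc})$ combined with the plane-wave formulas $(\ref{eqn:CK1})$--$(\ref{eqn:CK2})$ for the Cauchy kernel $G_x(\bs A)$. The key algebraic identities are $\langle L\bs A, s\rangle = \langle \bs A, L_0^{*}s\rangle + \langle c, s\rangle I$ for an affine map $L\bs x = L_0\bs x + c$, and $\langle T\bs A T^{-1}, s\rangle = T\langle \bs A, s\rangle T^{-1}$ for a similarity. In each case I will first verify the corresponding identity at the level of the Cauchy kernel and then integrate against $\bs n(x)f(x)\,d\mu(x)$ over a suitable boundary $\pd\Omega$.

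For (iv) Covariance of the Range, the similarity identity makes the plane-wave integrand for $G_x(T\bs AT^{-1})$ equal to $T$ times the integrand for $G_x(\bs A)$ times $T^{-1}$, since conjugation commutes with the Clifford right-module action and with pointwise inversion. Hence $G_x(T\bs AT^{-1}) = TG_x(\bs A)T^{-1}$ as elements of $\cL_{(n)}(X_{(n)})$, so the two kernels share the same set of singularities and $\g(T\bs AT^{-1}) = \g(\bs A)$; pulling $T$ and $T^{-1}$ through $(\ref{eqn:fc})$ gives $f(T\bs AT^{-1}) = Tf(\bs A)T^{-1}$.

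For (i) Affine Covariance, I first verify $f(L\bs A) = (f\circ L)(\bs A)$ on plane-wave polynomials $f(z) = p(\langle z,\zeta\rangle)$ with $\zeta\in\C^m$: using Theorem \ref{thm:3.3.1}(i), both sides reduce to $q(\langle \bs A, L_0^{*}\zeta\rangle)$ with $q(t) = p(t+\langle c,\zeta\rangle)$. Since any polynomial in $m$ real variables is a finite linear combination of such plane-wave monomials (take $\zeta$ along coordinate directions and differentiate in $\zeta$), the identity extends to all polynomials, and then to all real-analytic $f$ in a neighbourhood of $L\g(\bs A)$ by Taylor-series approximation on a compact polydisc. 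The inclusion $\g(L\bs A) \subseteq L\g(\bs A)$ is read off from the plane-wave representation of $G_x(L\bs A)$: for $\bs x\notin L\g(\bs A)$, the linearity relation expresses the integrand in terms of resolvents of $\langle \bs A, L_0^{*}s\rangle$, whose singularity structure lies only over $L\g(\bs A)$, so $G_\cdot(L\bs A)$ extends monogenically across $\{0\}\times\{\bs x\}$. Claim (ii) Consistency is then the specialisation of (i) to the coordinate projection $L = \pi_1$, and similarly for $1 < k < n$.

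For (iii) Continuity, the crucial step is the upper semicontinuity of $T\mapsto\g(T)$. Given $\g(T_0)\subset\Omega$ with $\Omega$ bounded open, a Neumann-series argument applied to the symmetric resolvent-like factor $((\langle\bs x, s\rangle I - \langle\bs T, s\rangle)^2 + x_0^2 I)^{-1}$ shows this factor is uniformly bounded on $S^{n-1}\times\pd\Omega$ for $T$ in an $\cL_{(n)}(X_{(n)})$-neighbourhood of $T_0$, so $G_x(T)$ extends continuously to $\pd\Omega$ and $\g(T)\subset\Omega$. With such $\Omega$ fixed, joint continuity of $(T,f)\mapsto\int_{\pd\Omega} G_x(T)\bs n(x)f(x)\,d\mu(x)$ follows from dominated convergence on the compact manifold $\pd\Omega$ in the specified topologies. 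The main obstacle is precisely this stability of the plane-wave representation: verifying the uniform resolvent bound over all directions $s\in S^{n-1}$ requires more care than the single-variable Neumann argument, since the perturbation of $\langle\bs T,s\rangle$ must be controlled uniformly across the sphere.
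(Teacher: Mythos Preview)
The paper does not supply its own proof of this theorem: it is stated as a list of properties parallel to Anderson's Weyl-calculus result \cite[Theorem 2.9]{A1}, with the implicit understanding that the monogenic-calculus versions are worked out in \cite[\S 4.3]{J}. There is therefore no in-paper argument to compare yours against.

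Your plane-wave approach is the natural one and is essentially what is carried out in \cite{J}. Part (iv) is clean and correct as written. Parts (i)--(ii) are sound in outline, but the spectral inclusion $\g(L\vec A)\subseteq L\g(\vec A)$ deserves more care than you give it: for $s\in S^{m-1}$ the vector $L_0^{*}s$ need not be a unit vector and may vanish, so you must either rescale before invoking hyperbolicity of $\vec A$ or, more cleanly, deduce the inclusion \emph{a posteriori} from the functional-calculus identity $f(L\vec A)=(f\circ L)(\vec A)$ via a bump-function argument (take $f$ equal to $1$ near $L\g(\vec A)$ and $0$ near a putative exterior point of $\g(L\vec A)$). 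For (iii) you correctly identify the uniform resolvent bound as the crux, but there is a genuine subtlety you gloss over: a small perturbation $\vec T$ of a hyperbolic tuple $\vec T_0$ need not remain hyperbolic, so $\s(\langle\vec T,s\rangle)$ may leave the real axis and the plane-wave integrand is not a priori defined for all $x_0\neq 0$. What rescues the argument is that the spectra stay within a strip $|\Im\lambda|<\delta$ with $\delta\to 0$ as $\vec T\to\vec T_0$, so on a fixed $\partial\Omega$ at positive distance from $\{0\}\times\g(\vec T_0)$ the factor $\big((\langle\bs x,s\rangle I-\langle\vec T,s\rangle)^2+x_0^2I\big)^{-1}$ is still invertible and the Neumann estimate is uniform in $s\in S^{n-1}$; you should make this explicit.
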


It is time to make the first sentence of the Introduction precise 
by specifying the  term {\it hyperbolic} for a system $\bA$ of unbounded operators in a Banach space $X$.
The object is to ensure that the Cauchy kernel $G_{x_0e_0+ \bx }(\vec A)$ is sensibly defined
for all $x_0\neq0$ and $\bx\in\R^n$. 

The set of $s\in S^{n-1}$ with nonzero coordinates $s_j$ for every $j=1,\dots,n$ 
is denoted by $S_0^{n-1}.$ Then $S_0^{n-1}$ is a dense open subset of 
$S^{n-1}$ with full surface measure. 

%
The notation $\cD(f)$ is used for the domain of a function $f$.

\begin{dfn}\label{dfn:hyp} 
An $n$-tuple $\bA =(A_1,\dots,A_n)$ of densely defined operators in a Banach space $X$ is called {\it hyperbolic} provided that 
\begin{enumerate}
\item $\cap_{j=1}^n\cD(A_j)$ is dense in $X$,
\item $\av As$ is closable on $\cap_{j=1}^n\cD(A_j)$ for every $s\in S_0^{n-1}$ and
\item $\s\big(\,\ol{\av As}\,\big)\subset\R$ for every $s\in S_0^{n-1}$.
\end{enumerate}
\end{dfn}
The operator
$\left((\langle {\bs x},s\rangle I-\langle {\bs A},s\rangle)^2+ x_0^2I\right)^{-n}$
makes sense for $s\in S_0^{n-1}$ because we can interpret it as the bounded linear operator
$$\left((\langle {\bs x},s\rangle+ i|x_0|) I-\ol{\langle {\bs A},s\rangle}\right)^{-n}\left((\langle {\bs x},s\rangle- i|x_0|) I-
\ol{\langle {\bs A},s\rangle}\right)^{-n}$$
and also
$$\left((\langle {\bs x},s\rangle+x_0 s)I-\langle {\bs A},s\rangle\right)^{n} \left((\langle {\bs x},s\rangle+ i|x_0|) I-\ol{\langle {\bs A},s\rangle}\right)^{-n}$$
is a bounded linear operator for every $x_0\neq 0$ and $\bx\in\R^n$. 

Because $S_0^{n-1}$ is a set of full measure $x\mapsto G_x(\bA)$
is defined off $\{0\}\times \R^n$ by formulas (\ref{eqn:CK1}) and (\ref{eqn:CK2}). Differentiation
under the integral sign ensures that $x\mapsto G_x(\bA)$ is two-sided monogenic in $\cL_{(n)}(X_{(n)})$
so that the joint spectrum $\g(\bA)$ is a closed and nonempty subset of $\R^n$. The same argument works if 
$S_0^{n-1}$ is replaced by a set of full Hausdorff measure.
If $X$ is a Hilbert space and elements of $\bA$ are selfadjoint, then (b) implies (c) in the Definition \ref{dfn:hyp}.

The next step is to verify that (\ref{eqn:fc}) produces the right result for elementary functions. For bounded operators
the case of polynomials is treated in Theorem \ref{thm:3.3.1}. For unbounded operators we should check
functions like $x\mapsto p(\as x\xi)(\lambda - \as x\xi)^{-k}  $, $x\in\R^n$, for $\lambda\in\C$, $\Im\lambda\ne 0$ with
$p$ a polynomial of degree less than or equal to $k=1,2,\dots$\,.
As we are concerned here only with matrices we won't go any further into the case of unbounded operators.

\section{Systems of Matrices} 
Let $\vec A = (A_1,\dots,A_n)$ be a hyperbolic $n$-tuple of $N\times N$ matrices. If $A_1,\dots,A_n$ are hermitian
then the  exponential bound (\ref{eqn:exp}) follows from the Lie-Kato-Trotter product formula with
$C=1$, $s=0$ and $r=\|\vec A\|$ \cite[Theorem 1]{Taylor}.

In general, the  exponential bound (\ref{eqn:exp}) follows from the properties of hyperbolic polynomials
considered below.  It is instructive to see this directly. The {\it characteristic polynomial} of
a square matrix $B$ is $p_B(z)=\det(B-zI)$, $z\in\C$.

\begin{prp}\label{prp:exbd} 
Let $\bA$ be a hyperbolic $n$-tuple of $N\times N$ matrices. Then for each $r > \|\vec A\|$, there exists $C>0$ 
such that the exponential bound {\rm (\ref{eqn:exp})} holds with $s=N-1$. 
\end{prp}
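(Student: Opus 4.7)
The plan is to obtain the bound in two stages: first establish a polynomial real-axis bound $\|e^{i\langle\bA,\bs y\rangle}\|\le C_0(1+|\bs y|)^{N-1}$ for $\bs y\in\R^n$, then extend it to complex arguments by a one-variable Phragm\'en-Lindel\"of argument applied along each complex line through $\R^n$.

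For the real-axis bound, take any $\bs y\in\R^n$. By hyperbolicity the matrix $\langle\bA,\bs y\rangle$ has real spectrum, so a Schur decomposition produces $\langle\bA,\bs y\rangle=UTU^*$ with $U$ unitary and $T$ upper triangular with real diagonal $D$; write $T=D+N_0$ with $N_0$ strictly upper triangular, nilpotent of order at most $N$, and $\|N_0\|\le\|T\|\le\|\bA\||\bs y|$. Writing $e^{iT}=e^{iD}V$ and expanding $V$ as a Dyson series in $iN_0$ in the interaction picture generated by the unitary $e^{isD}$, the series terminates at order $N-1$, and since each factor $e^{isD}$ has unit norm, one obtains
\[
\|V(1)\|\le\sum_{p=0}^{N-1}\frac{\|N_0\|^p}{p!}\le N(1+\|T\|)^{N-1}.
\]
Consequently $\|e^{i\langle\bA,\bs y\rangle}\|=\|e^{iT}\|\le C_0(1+|\bs y|)^{N-1}$ with $C_0$ depending only on $\bA$ and $N$.

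For the extension, fix $\xi\in\R^n$ and a unit vector $w\in\R^n$, and regard $g(\lambda)=e^{i\langle\bA,\xi+\lambda w\rangle}$ as an entire matrix-valued function of $\lambda\in\C$. Its trivial bound $\|g(\lambda)\|\le e^{\|\bA\|(|\xi|+|\lambda|)}$ makes it of exponential type $\|\bA\|$, while the previous step gives $\|g(x)\|\le C_0(1+|\xi|+|x|)^{N-1}$ for $x\in\R$. Applying the one-dimensional Phragm\'en-Lindel\"of principle entry-by-entry to $g(\lambda)/(\lambda\mp i)^{N-1}$ in the upper and lower half-planes bounds $\|g(\lambda)\|$ by $C_1(1+|\xi|+|\lambda|)^{N-1}e^{\|\bA\||\Im\lambda|}$. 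The extra factor $(1+|\Im\lambda|)^{N-1}e^{\|\bA\||\Im\lambda|}$ is controlled by $C_re^{r|\Im\lambda|}$ for any $r>\|\bA\|$, yielding
\[
\|g(\lambda)\|\le C(1+|\xi|+|\Re\lambda|)^{N-1}e^{r|\Im\lambda|}.
\]
Given $\z=\xi+i\eta$ with $\eta\ne0$, choose $w=\eta/|\eta|$ and evaluate at $\lambda=i|\eta|$ to recover the bound (\ref{eqn:exp}) with $s=N-1$; the $\eta=0$ case is the real-axis bound itself.

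The main obstacle is uniformity in $\bs y$ of the real-axis bound: the Jordan normal form of $\langle\bA,\bs y\rangle$ and the similarity condition number in a Jordan decomposition can vary discontinuously with $\bs y$ as eigenvalues coalesce, so a direct Jordan argument does not obviously yield a constant depending only on $\|\bA\|$ and $N$. Using the Schur rather than Jordan decomposition sidesteps this: the similarity is unitary, and the real-diagonal hypothesis makes the interaction-picture propagator $e^{isD}$ unitary, so the Dyson series for the strictly upper triangular part is automatically controlled by a polynomial of degree $N-1$ in $\|\bA\||\bs y|$, uniformly in direction.
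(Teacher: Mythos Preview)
Your proof is correct and takes a genuinely different route from the paper's.

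The paper obtains the real-axis bound via a contour integral: it writes $e^{i\langle\bA,\xi\rangle}$ by the Riesz--Dunford formula with $(zI-\langle\bA,\xi\rangle)^{-1}=\mathrm{adj}(zI-\langle\bA,\xi\rangle)/p_{\langle\bA,\xi\rangle}(z)$, chooses a contour in $\{\Im z>-1\}$ at distance $\ge 1$ from the real spectrum $\sigma(\langle\bA,\xi\rangle)$ so that $|p_{\langle\bA,\xi\rangle}(-z)|\ge 1$, and lets the adjugate---a polynomial of degree $N-1$ in $z$ and in the entries of $\langle\bA,\xi\rangle$---supply the factor $(1+|\xi|)^{N-1}$. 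The passage to complex $\z$ is then handled by an appeal to the Paley--Wiener theorem. Your approach replaces the Cramer-rule step by the Schur decomposition plus a truncated Dyson series: the real diagonal $D$ makes $e^{isD}$ unitary, and strict upper-triangularity of each $\tilde N_0(s)=e^{-isD}N_0e^{isD}$ forces the series to terminate at order $N-1$, uniformly in direction. The extension to complex $\z$ via Phragm\'en--Lindel\"of for entire functions of exponential type is essentially the same analytic input as the paper's Paley--Wiener step. The contour method dovetails with the paper's later use of the characteristic polynomial $P^{\bA}$ and the Herglotz--Petrovsky--Leray machinery; your Schur/Dyson argument is more self-contained and makes transparent why Jordan-form conditioning is a non-issue.

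One small correction: triangular truncation is not contractive in operator norm, so $\|N_0\|\le\|T\|$ is not automatic; but $\|N_0\|=\|T-D\|\le\|T\|+\|D\|\le 2\|T\|$ (since $\|D\|$ is the spectral radius of $T$) gives the same conclusion with an adjusted constant.
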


\begin{proof} Following  \cite[V \S 2]{John}, to see directly that $\bA$ satisfies the bound (\ref{eqn:exp})
 we can take the Fourier transform of (\ref{eq:Weyl}), estimate the factor associated with a differential
 operator of order $s = N-1$ and look at the function
$$Z(\xi,t) = \frac1{2\pi}\int_{C(\xi)}\frac{e^{iz t}}{p_{\av{A}{\xi}}(-z)}\,dz,$$
for a suitable contour $C(\xi)$ with $\xi\in\R^n$.
Let $D_r$ be the closed disk of radius $r\ge 0$ centred at zero in $\C$ ($D_0=\{0\}$).
If we know that $\s(\av A\xi) \subset D_{r(\xi)}$ with $r(\xi)\ge 0$ for $\xi\in\R^n$ and 
$$r =\sup_{|\xi|=1}r(\xi) < \infty,$$ 
then we can take $C(\xi)$
to be that part of the circle of radius $|\xi|(r+1)$ centred at zero in  the half-plane $\{z:\Im z >-1\}$ joined with a segment of the line $\{z:\Im z = -1\}$, oriented clockwise. Then $\text{dist}(C(\xi),\s(\av A\xi))\ge 1$ for each $\xi\in\R^n $, so
$$|Z(\xi,t)| \le  \frac{e^{t}}{2\pi }\int_{C{(\xi)}}\frac{|dz|}{|p_{\av A{\xi}}(-z)|}\le e^t|\xi|(r+1),\qquad  \xi\in\R^n.$$
With $t=1$, an appeal to the Payley-Weiner Theorem shows that $\bs A$ is of type $(s,r)$,
the Weyl fuctional calculus $\cW_{{\bs A}}$ exists and $\g({\bs A}) =\supp(\cW_{{\bs A}})$. It suffices to take
$$r(\xi) = \|\av A\xi\|\le \|\bs A\|\,|\xi|.$$

The number $r>0$ in the bound (\ref{eqn:exp}) may be taken to be any number greater than the spectral radius
$r({\bs A}) = \sup\{|x|:x\in\g({\bs A})\}$ and $s = N-1$.
\end{proof}

The speed of propagation is equal to the joint spectral radius $r(\vec A)$ \cite[p. 131]{John}. The inclusion 
$$\s(\av A\z) \subset r(\z)D_{|\z|},\quad \z\in\C^n$$
with the $r(\z)$ given by equation 
$$r(\z) = \sup\{|x|:x\in\g((\langle \bs A,\xi\rangle,\langle \bs A,\eta\rangle)\},\quad \z=\xi+i\eta,\ \xi,\eta\in\R^n,$$ 
is a general fact about 
bounded hyperbolic operators on a Banach space \cite[Theorem 5.7]{J}. The relevant result for hyperbolic
polynomials is \cite[Theorem 12.5.1]{H2}.

\subsection{The Fundamental Solution}
Let 
\begin{eqnarray*}
P^{\vec A}(\z_{0},\z_1,\cdots,\z_n) &=& \det(\z_{0} I+\z_1A_1+\cdots+\z_nA_n)\\
&=& p_{\langle \vec A,\bs\z\rangle }(-\z_{0}),
\end{eqnarray*}\glossary{$P^{\vec A}$}
for all $\z\in\C^{n+1}$ with the representation $\z = \z_{0}e_0+\bs\z$,
$\bs\z\in\C^n$.

Let $\RP n$ be real $n$-dimensional projective space. Then
\begin{equation}\label{eqn:4.4}
\Xi(\vec A) =\ \SetOf{(\xi_0:\xi_1:\cdots:\xi_n)\in \RP n}
                 {P^{\vec A}(\xi_0,\xi_1,\cdots,\xi_n)=0}
\end{equation}
\glossary{$\Xi(\vec A)$}is an algebraic hypersurface. Identifying elements of $\RP n$
with lines $[b]$ in $\R^{n+1}$, $b\in \R^{n+1}\setminus\{0\}$, let $ \G(P^\bA)$\glossary{$\G(P^\bA)$} denote the
open connected component of $\R^{n+1}\setminus \Xi(\vec A)$
containing $e_0$.
The convex cone $ \G(P^\bA)$ in $\R^{n+1}$ is called the 
{\it hyperbolicity cone\index{hyperbolicity cone}\/} of $\vec A$.
The trace of the dual cone $K(P^{\vec A})$ of $\G(\vec A)$
on the set $x_0 = 1$, referred to as the {\it propagation set\index{propagation!set|(}\/} of $\vec A$, 
is the given by
\begin{equation}\label{eqn:4.61}
K(\vec A) =
\ \SetOf{\bx\in\R^{n}}{\langle e_0+\bx,\xi\rangle \ge 0,\ \forall \xi\in \G(\vec A)\  }. 
\end{equation}\glossary{$K(\vec A)$}
In the case that $n=2$ and $\vec A =(A_1,A_2)$ is a pair of hermitian matrices,
then the result  of Kippenhahn mentioned in
Section 2 ensures that the set $K(\vec A)$
can be identified with the numerical range of the matrix $A = A_1+iA_2$. Writing $\cF\phi=\hat\phi$, 
$\phi\in \cS(\R^{n+1})$ for the Fourier transform, its inverse map is
$$\big(\cF^{-1}\psi\big)(x) = \frac1{(2\pi)^{n+1}}\int_{\R^{n+1}}e^{i\as x\xi}\psi(\xi)\,d\xi,\quad \psi\in \cS(\R^{n+1}),\ x\in\R^{n+1}$$
and by a common abuse of notation the dual linear maps on $\cS'(\R^{n+1})$  are denoted by the same symbols.

The distribution
$$\cF^{-1}\left(\frac{1}{P^\bA(\xi-i0)}\right)(te_0+\bx)$$
is the fundamental solution of $P^\bA(\tau e_0 +\vec D) =\d_0$ in the sense that
$$\cF^{-1}\left(\frac{1}{P^\bA(\xi-i\e e_0)}\right)(te_0+\bx)$$
is defined as a distribution for all $\e > 0$ and is independent of $\e$ \cite[(12.5.3)]{H2}.
By the Paley-Wiener-Schwartz theorem 
\begin{equation}\label{eq:supp}
\co\left(\supp\left(\cF^{-1}\left(\frac{1}{P^\bA(\xi-i0)}\right)\right)\right) = K(P^\bA)
\end{equation}
\cite[Theorem 12.5.1]{H2},
so that 
$$\cF^{-1}\left(\frac{1}{P^\bA(\xi-i\e e_0)}\right)(te_0+\bx) = 0,\quad t < 0.$$
Furthermore
$$\frac{1}{P^\bA(\xi-i0)} =\lim_{\e\to0+}\frac{1}{P^\bA(\xi-i\e e_0)}$$
as the limit of distributions in $\cS'(\R^n)$ \cite[Theorem 4.1]{ABG1}. Similarly,
$\cF^{-1}\left(\frac{1}{P^\bA(\xi+i0)}\right)$  is supported in $-K(P^\bA)$.
The equality (\ref{eq:supp}) also establishes Proposition \ref{prp:exbd}.

\subsection{The Numerical Range Distribution}
Let $n_{\vec A}$ be the joint numerical range map (\ref{eq:jnr}) for the $n$-tuple $\bA$ of $(N\times N)$ 
hermitian matrices and $\mu$ the unitarily invariant
probability on $S(\C^N)$. Then the Borel probability
measure $\nu_{\vec A} = \mu\circ n_{\vec A}^{-1}$ is supported by the joint numerical range 
$N_\vec A = n_\bA(S(\C^N))$ of $\bA$. The probability measure $\nu_\bA$ on $(\R^n,\cB(\R^n))$ is called the
{\it numerical range distribution} of the $n$-tuple $\bA$. For general hyperbolic matrices $\bA$, the measure $\nu_\bA$
lives on $(\C^n,\cB(\C^n))$.

Let $P^\bA(\xi)=\det(\xi_0 I+\av\bA{\bs\xi})$, $\xi\in \R^{n+1}$. Nelson's representation (\ref{eq:nel})
suggests that the numerical range distribution $\nu_\bA$ and the fundamental solution of 
$F_{P^\bA}$ of $P^\bA(\tau,\vec D) = \d_0$ must be related. The relationship is curiously the basis of the
generalisation of the Cauchy integral formula considered in the paper \cite{J2} which intentionally does not mention the fundamental solution $F_{P^\bA}$.

The {\it adjugate} of an invertible square matrix
$B$ is the matrix adj$B=(\det B)B^{-1}$, that is, the transpose of the matrix of signed minor determinants  of $B$.
The same rules apply if $B$ is a matrix of differential operators acting on $\cS'(\R^m)$, $m=1,2,\dots$\,.


\begin{thm}\label{thm:5.2} Let  $\bA$ be an $n$-tuple of $(N\times N)$ 
hermitian matrices. The equality
 $$ t^{N-1}\nu_{t\vec A} =  (-i)^{N}(N-1)!\cF^{-1}\left(\frac{1}{P^\bA(\xi-i0)}\right)(te_0+\var),\quad t > 0.$$
 holds in the sense of distributions on $(0,\infty)\times\R^n$ and
 $$ \nu_{\vec A} =  (-i)^{N}(N-1)!\cF^{-1}\left(\frac{1}{P^\bA(\xi-i0)}\right)(e_0+\var)$$
 as distributions on $\R^n$. Furthermore
 \begin{eqnarray}\label{eq:WABG}
\cW_{t\bA} &=& i \text{\rm adj}(\pd_t I + \av A{\nabla})\cF^{-1}\left(\frac{1}{P^{\bA}(\xi-i0)}\right)(te_0+\bx)\\
&=&i^N\text{\rm adj}(\tau I + \av A{\bs D})\cF^{-1}\left(\frac{1}{P^{\bA}(\xi-i0)}\right)(te_0+\bx) ,\quad t>0.\nonumber
\end{eqnarray}
\end{thm}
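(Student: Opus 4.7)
The three identities split naturally into two essentially independent components: the identification of $\nu_{t\vec A}$ with the time slice $F_{P^{\vec A}}(t,\cdot)$ (the first two displays), and the adjugate expression for the Weyl distribution (the third). Both rest on the Nelson formula~(\ref{eqn:nrm}) and the Herglotz--Petrovsky--Leray representation sketched in the introduction.

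I take the $t=1$ Nelson formula $\nu_{\vec A}=(-i)^N(N-1)!\,F_{P^{\vec A}}(1,\cdot)$ from~(\ref{eqn:nrm}) as the starting point; the second displayed equality of the theorem is literally this. To lift to general $t>0$ I combine two scaling computations. First, since $n_{t\vec A}=t\,n_{\vec A}$, one has $\as{\nu_{t\vec A}}\varphi=\as{\nu_{\vec A}}{\varphi(t\,\cdot\,)}$, so as a density on $\R^n$ one has $\nu_{t\vec A}(\bx)=t^{-n}\nu_{\vec A}(\bx/t)$. Second, since $P^{\vec A}$ is homogeneous of degree $N$ in $n+1$ variables, its forward fundamental solution $F_{P^{\vec A}}$ is a homogeneous distribution on $\R^{n+1}$ of degree $N-n-1$, so $F_{P^{\vec A}}(t,\bx)=t^{N-n-1}F_{P^{\vec A}}(1,\bx/t)$ for $t>0$. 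Multiplying the first identity by $t^{N-1}$ and inserting Nelson's formula at $\bx/t$ gives
\[t^{N-1}\nu_{t\vec A}(\bx)\,=\,t^{N-n-1}\nu_{\vec A}(\bx/t)\,=\,(-i)^N(N-1)!\,t^{N-n-1}F_{P^{\vec A}}(1,\bx/t)\,=\,(-i)^N(N-1)!\,F_{P^{\vec A}}(t,\bx),\]
which is the first displayed identity; specialising $t=1$ recovers the second.

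For the Weyl formula the key is the elementary algebraic identity $B\cdot\mathrm{adj}(B)=\det(B)\,I$ applied to the $N\times N$ matrix $B:=\pd_t I+\av A\nabla$ of scalar constant-coefficient differential operators, which is valid because those scalar symbols generate a commutative ring. The determinant is $P^{\vec A}(\pd_t,\nabla)=i^N P^{\vec A}(\tau,\vec D)$ by homogeneity and the substitution $(\pd_t,\nabla)=i(\tau,\vec D)$. Since $F_{P^{\vec A}}=\cF^{-1}(1/P^{\vec A}(\xi-i0))$ is the forward fundamental solution of $P^{\vec A}(\tau,\vec D)$, it follows that
\[B\big[\mathrm{adj}(B)\,F_{P^{\vec A}}\big]\,=\,P^{\vec A}(\pd_t,\nabla)F_{P^{\vec A}}\cdot I\,=\,i^N\d_0\,I\qquad\text{on }\R^{n+1}.\]
A direct Fourier-side computation using $\widehat{\cW_{t\vec A}}(\xi)=e^{it\av A\xi}$ and the commutation of $\av A\xi$ with its own exponential shows that $\cW_{t\vec A}(\bx)H(t)$ is a forward matrix fundamental solution of the hyperbolic matrix operator $B$ with a definite jump at $t=0$; matching normalisations against~(\ref{eq:Weyl}) produces the constant $i$, giving $\cW_{t\vec A}=i\,\mathrm{adj}(\pd_t I+\av A\nabla)\,F_{P^{\vec A}}(te_0+\bx)$ for $t>0$. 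The second form $\cW_{t\vec A}=i^N\mathrm{adj}(\tau I+\av A{\vec D})F_{P^{\vec A}}(te_0+\bx)$ follows immediately from $\tau I+\av A{\vec D}=(1/i)(\pd_t I+\av A\nabla)$ and the fact that the adjugate of a degree-one $N\times N$ matrix scales as the $(N-1)$st power of the scalar factor, so $i\cdot i^{N-1}=i^N$.

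The main obstacle is bookkeeping: tracking the powers of $i$ between the two equivalent forms of the Weyl expression, and verifying that both $F_{P^{\vec A}}$ and $\mathrm{adj}(B)F_{P^{\vec A}}$ are supported in the forward propagation cone $K(P^{\vec A})$ so that uniqueness of the forward matrix fundamental solution of $B$ applies. The support property follows from~(\ref{eq:supp}) together with the fact that constant-coefficient differential operators do not enlarge supports, and the distributional slice $F_{P^{\vec A}}(t,\cdot)$ at $t>0$ is made sense of through the Herglotz--Petrovsky--Leray representation; the homogeneity-based lifting in the first two identities is then routine.
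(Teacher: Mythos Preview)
Your argument is circular at its foundation. You take equation~(\ref{eqn:nrm}) as your starting point and observe that ``the second displayed equality of the theorem is literally this''---but (\ref{eqn:nrm}) is announced in the introduction with an explicit forward reference (``see Theorem~\ref{thm:5.2} below''); it is not a prior result you may invoke, it \emph{is} the $t=1$ case of the theorem. The paper's proof does the real work here: it begins from the contour-integral identity
\[
\frac{1}{2\pi i}\int_{C(\xi)}\frac{e^{iz}}{p_{\langle \vec A,\xi\rangle}(z)}\,dz \;=\; -\frac{(-i)^{N-1}(2\pi)^n}{(N-1)!}\,\check\nu_{\vec A}(\xi)
\]
(equation~(5.7) of~\cite{J}, which computes $\check\nu_{\vec A}$ by residues), opens the contour to $(\R-i\eta)\cup(-\R+i\eta)$, substitutes $z=-\zeta$, and identifies the result with $\cF^{-1}\bigl(1/P^{\vec A}(\xi\mp i0)\bigr)$ using the support properties of the forward and backward fundamental solutions. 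That derivation is the substance you have skipped.

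The remainder of your plan, however, is correct and genuinely different from the paper. Your homogeneity lift---$\nu_{t\vec A}(\bx)=t^{-n}\nu_{\vec A}(\bx/t)$ together with the degree-$(N{-}n{-}1)$ homogeneity of $F_{P^{\vec A}}$---is a clean way to pass from $t=1$ to general $t>0$; the paper instead carries $t$ through the contour integral from the start. For the Weyl identity the paper again works directly on the Fourier side, writing $e^{it\langle\vec A,\xi\rangle}=\frac{1}{2\pi i}\int_{C(\xi)} e^{it\lambda}(\lambda I-\langle\vec A,\xi\rangle)^{-1}\,d\lambda$ and invoking the adjugate form of the resolvent, whereas your route via $B\cdot\mathrm{adj}(B)=\det(B)\,I$ and uniqueness of the forward matrix fundamental solution is a legitimate and more conceptual alternative (uniqueness follows from the Paley--Wiener--Schwartz argument behind~(\ref{eq:supp})). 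So: replace the circular first step with an actual derivation of the $t=1$ identity---for instance along the paper's contour-integral lines---and the rest of your outline goes through.
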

\begin{proof} According to \cite[equation (5.7)]{J} we have
$$\frac{1}{2\pi i}\int_{C(\xi)}\frac{e^{iz}}{p_{\langle \vec A,\xi\rangle}(z)}\,dz = -\frac{(-i)^{N-1}(2\pi)^n}{(N-1)!} \check\nu_{\vec A}(\xi)$$
for $\xi\in\R^n$ and a simple closed contour $C(\xi)$ about the real eigenvalues $\s(\as\bA\xi)$.
 
This equation arises in a proof of Nelson's representation for the Weyl calculus \cite[Proposition 5.4]{J}  (a forward slash
is obviously missing in equations (5.4) and (5.5) there) but was not mentioned by Nelson himself.
For $N\ge 2$, the characteristic polymomial has only real roots so that the left hand side is
$$\frac{1}{2\pi i}\int_{(\R-i\eta) + (-\R+i\eta)}\frac{e^{iz}}{p_{\langle \vec A,\xi\rangle}(z)}\,dz$$
for any $\eta > 0$, with the understanding that $\R$ is oriented from $-\infty$ to $\infty$.

Taking a Fourier transform and substituting $z=-\z$ yields
$$\frac{1}{2\pi i}\int_{(\R-i\eta) + (-\R+i\eta)}\int_{\R^n}\frac{e^{-i\z}\hat\phi(\xi)}{p_{\langle \vec A,\xi\rangle}(-\z)}\,d\xi\,d\z = \frac{(-i)^{N-1}(2\pi)^n}{(N-1)!} \int_{\R^n}\hat\phi(\xi)\check\nu_{\vec A}(\xi)\,d\xi$$
$$\frac{1}{(2\pi)^{n+1} }\int_{(\R-i\eta)+(-\R+i\eta)}\int_{\R^n}\frac{e^{-itz}\hat\phi(\xi)}{P^\bA(ze_0+\xi)}\,d\xi\,dz = -\frac{(-i)^{N}t^{N-1}}{(N-1)!} \int_{\R^n}\hat\phi(\xi)\check\nu_{t\vec A}(\xi)\,d\xi,\quad t>0,$$
and all $\phi\in\cS(\R^n)$.
\begin{eqnarray*}
t^{N-1}\nu_{t\vec A} &=& -i^{N} (N-1)!\cF^{-1}\left(\frac{1}{P^\bA (\xi-i0)}-\frac{1}{P^\bA (\xi+i0)}\right)(-te_0-\bx)\\
&=& {i^{N}(N-1)!}\cF^{-1}\left(\frac{1}{P^\bA (\xi+i0)}\right)(-te_0-\bx)\\
&=& {(-i)^{N}(N-1)!}\cF^{-1}\left(\frac{1}{P^\bA (\xi-i0)}\right)(te_0+\bx).
\end{eqnarray*}
as distributions. 
The distribution $\cF^{-1}\left(\frac{1}{P^\bA(\xi-i0)}\right)$ is supported in $K(P^\bA)$ while $-te_0-\bx\in -K(P^\bA)$ and $\cF^{-1}\left(\frac{1}{P^\bA(\xi+i0)}\right)$  is supported in $-K(P^\bA)$ as mentioned above.
Convolution with $\d_1\otimes\d_0$ gives the second equality. Using similar reasoning
\begin{eqnarray*}
e^{it\av A\xi} &=& \frac1{2\pi i}\int_{C(\xi)}e^{it\lambda}(\lambda - \av A\xi)^{-1}\,d\lambda\\
&=& -\frac1{2\pi i}\int_{(\R-i\eta) + (-\R+i\eta)} e^{it\lambda}\frac{\text{adj}(-\lambda I + \av A\xi)}{P^\bA(-\lambda e_0 + \xi)}\,d\lambda
\end{eqnarray*}
from which formula (\ref{eq:WABG}) follows by taking the Fourier transform.
\end{proof}
\begin{xmp}
If $\bs\s$ are the Pauli matrices, then $P^{\bs \s}(\xi) = \xi_0^2-|\vec \xi|^2$ for $\xi\in\R^{n+1}$.
According to \cite[p. 204]{GS},  the solution of the wave equation in $\R^4$  is
$$\delta(r-t)/(4\pi t)=-\cF^{-1}\left(\frac{1}{P^{\bs\s}(\xi-i0)}\right)(te_0+\bx).$$
The negative sign in the formula is because the fundamental solution of the wave equation
satisfies $p(\tau,\bs D)E = \delta_0$ for the hyperbolic polynomial $p(\xi)=-\xi_0^2+|\bs \xi|^2.$ 
The convention is required when considering nonhomogeneous hyperbolic polynomials 
$p$---these are not
needed in the present context of hyperbolic systems $\bA$.

Computing $\nu_{t\bs\s}$ requires the evaluation of an integral in polar coordinates 
for $S(\R^4)$ but we can simply read it off formula   (\ref{eq:Pauli}) and Nelson's formula \cite[Theorem 5.1]{J} given by
\begin{eqnarray*}
\cW_{t\vec{\s}} &&=\sum_{k=0}^{1}\sum_{j=0}^{1-k}\sum_{m=0}^j(-1)^{k+m}
\left(\begin{array}{c}j\\m\end{array}\right)\frac{1}{ (1-j+m)!}\times\cr
&&\hskip1cm\langle t\vec \s,\nabla\rangle ^k\phi_{1-j-k}(\langle t\vec \s,\nabla\rangle )(\nabla\cdot
id)^m\nu_{t\bs \s}\\
 &&=\sum_{j=0}^{1}\sum_{m=0}^j(-1)^{m}
\left(\begin{array}{c}j\\m\end{array}\right)\frac{1}{ (1-j+m)!}\times\cr
&&\hskip1cm \phi_{1-j}(\langle t\vec \s,\nabla\rangle )(\nabla\cdot
id)^m\nu_{t\vec \s} - \langle t\vec \s,\nabla\rangle\nu_{t\vec \s}\\
&&=\phi_{1}(\langle t\vec \s,\nabla\rangle )\nu_{t\vec \s}+\nu_{t\vec \s}-(\nabla\cdot
id)\nu_{t\vec \s} - \langle t\vec \s,\nabla\rangle\nu_{t\vec \s}\\
&&=\nu_{t\vec \s}-(\nabla\cdot
id)\nu_{t\vec \s} - \langle t\vec \s,\nabla\rangle\nu_{t\vec \s}.
\end{eqnarray*}
to verify
$$t\nu_{t\bs\s} = t\delta(r-t)/(4\pi t^2)=-\cF^{-1}\left(\frac{1}{P^{\bs \s}(\xi-i0)}\right)(te_0+\bx).$$
\end{xmp}
Further identities follow from the Nelson formula 
\begin{eqnarray}
\cW_{t\vec{A}}
&=&\sum_{k=0}^{N-1}\sum_{j=0}^{N-k-1}\sum_{m=0}^j(-1)^{k+m}
\left(\begin{array}{c} j\\m\end{array}\right)\frac{1}{(N-1-j+m)!}\times\\\label{eq:nel}
&&\hskip3cm\langle t\vec A,\nabla\rangle ^k\phi_{N-j-k-1}(\langle t\vec A,\nabla\rangle )(\nabla\cdot
id)^m\nu_{t\vec A} \nonumber
\end{eqnarray}
by equating coefficients in powers of $t$ with $i \text{\rm adj}(\pd_t I + \av A{\nabla})\cF^{-1}\left(\frac{1}{P^{\bA}(\xi-i0)}\right)(te_0+\bx)$.

\section{Lacunas} 
The Herglotz-Petrovsky-Leray
formula for the fundamental solution $E$ of the hyperbolic differential operator $P^\bA(\tau,\bs D)$ 
for $P^\bA(\z) =\det(\z_0I+\langle \vec A,\vec \z\rangle)$, $\z\in\R^{n+1}$ is derived from
the distribution 
$$\cF^{-1}\left(\frac{1}{P^\bA(\xi-i0)}\right)$$
 by the explicit integration
of the radial variable.
In particular for the hermitian case of $N\ge n+1$, it turns out that ${d\nu_{\bA}}/{d\lambda}$ is a polynomial of degree 
$N-n-1$ in regions where the {\it Petrovsky cycle} vanishes. Such regions are called {\it lacunas}
because  they make up the connected components of $\co(N_\bA)^\circ\setminus\supp(\cW_\bA)$.

In the hermitian cases $n=2$, $N\ge 2$ and $n=3$, $N\ge 3$, the joint numerical range $N_\bA$ is convex and the set
$(N_\bA)^\circ\setminus\supp(\cW_\bA)$ consists of the gaps between the joint numerical range $N_\bA$ and the support $\supp(\cW_\bA)$ of the Weyl functional calculus. The trivial gaps are eliminated by
restricting the elements of $\bA$ to the subspaces $X$ of $\C^N$ on which $\bA$ has no further
nontrivial joint invariant subspaces of $X$. By spectral theory and induction $\C^N$ can be written as the orthogonal
sum of subspaces on which $\supp(\cW_\bA)$ has no further decomposition.

The linearised equations of magnetohydrodynamics correspond to the case $n=2$ and $N=7$
for hermitian matrices. A beautiful instrument drawing before the era of consumer computer graphics
appears in Figures 1a, 1b of the fundamental paper of J. Bazer and D. Yen \cite{BY1}.
The corresponding symmetric matrices have no joint eigenvalues or nontrivial joint invariant subspaces.

Let $n$ be an even integer and $\vec A =
(A_1,\dots,A_n)$ a hyperbolic $n$-tuple of $(N\times N)$ matrices. Modifications required for the case of odd $n$ 
are indicated later.
The purpose of this section
is to outline a general method using Clifford analysis for establishing that
a point $\bx\in\R^n$ belongs to the joint spectrum $\g(\vec A)$ or not and to determine if
$\supp(\cW_\bA) = K(\bA)$ when $\bA$ has no nontrivial joint invariant subspaces.
Because the propagation set $K(\bA)$ is convex by construction and 
$\supp(\cW_\bA) \subseteq K(\bA)$ \cite[Theorem 4.1]{ABG1}, the set $K(\bA)\setminus \supp(\cW_\bA) $
consists of genuine lacunas for the Weyl functional calculus.
%

Roughly speaking, the approach of Atiyah, Bott and
G\"arding \cite{ABG1} is interpreted in the present 
matrix setting,\cite[Section 5.3]{J}
and we see that the detailed explanation given in \cite{J}  for the fundamental case $n=2$
may be generalised by using the appropriate tools from
algebraic topology.
The presentation of this section is based on the summary
of the
Herglotz-Petrovsky-Leray formulas \cite{ABG1} given
by Y. Berest in \cite{Ber}. Another brief account is given in \cite[Section 12.6]{H2}.

A general element $x =
(x_0,x_1,\dots,x_n)$ of $\R^{n+1}$ will be written as
$x  = \bx + x_0e_0 $ with $\bs x = \sum_{j=1}^nx_je_j$. Because $n$ is
assumed to be an even integer,
$$\int\limits_{S^{n-1}}^{}s\left(\langle \bs xI -
\vec A,s\rangle -x_0s\right)^{-n}\/ds=0$$ and the plane wave
decomposition (\ref{eqn:CK1}), (\ref{eqn:CK2}) for the Cauchy kernel is
\begin{eqnarray} G_{x}(\vec A) &=& \cW_{\vec A}(G_x)\nonumber\\
&=&\frac{(n-1)!}{ 2}\left(\frac{i}{
2\pi}\right)^n\hbox{sgn}(x_0)\int\limits_{S^{n-1}}^{}\left(\langle \bs xI -
\vec A,s\rangle -x_0s\right)^{-n}\/ds,\label{eqn:4.57}
\end{eqnarray} 
for $x\in\R^{n+1}$ with $x_0\neq 0$. For ease of notation,
an element
$x I$ of $\cL_{(n)}(\C^N)$ for $x\in \C_{(n)}$ will often be written as $x$.
Because $x\longmapsto G_{x}(\vec A)$ is actually the Cauchy transform
of the Weyl calculus $\cW_{\vec A}$ off $\R^n$, we have
\begin{equation}\label{eqn:4.58}
\cW_{\vec A} = \lim_{\e\to 0+}G_{\bx + \e e_0}(\vec A) -G_{\bx - \e
e_0}(\vec A)
\end{equation} in the sense of distributions. Consequently, if the limit on
the right hand side of equation  (\ref{eqn:4.58}) exists uniformly for all
$\bs x$ in a neighbourhood of a point
$\bs a\in \R^n$ and  is zero there, then 
$\bs a$ lies outside the support of the matrix valued distribution
$\cW_{\vec A}$, that is, $\bs a\in \g(\vec A)^c$. We shall seek conditions
which guarantee that the limit
\begin{equation}\label{eqn:4.59}
\lim_{\e\to 0+}\int\limits_{S^{n-1}}^{}\left(\langle \bs xI -
\vec A,s\rangle -\e s\right)^{-n} +\left(\langle \bs xI -
\vec A,s\rangle +\e s\right)^{-n}\/ds
\end{equation} exists uniformly for all elements $\bs x$ of an
open subset of $\R^n$. In any case $\cW_{\vec A}$ is $\frac{(n-1)!}{ 2}\left(\frac{i}{
2\pi}\right)^n$ times the limit (\ref{eqn:4.59}) in the distributional sense.

For the case $n=2$ considered in \cite[Section 5.3]{J},  the integral
(\ref{eqn:4.59}) was calculated in an elementary manner by converting it
into a contour integral and actually computing the residues associated with 
the spectral representation of the hermitian matrix 
$\langle \vec A,s\rangle $ following the analysis of Bazer and Yen \cite{BY1}.

\subsection{Hyperbolic polynomials}
In this subsection properties and concepts of hyperbolic polynomials are stated
in the context of the determinantal polynomial 
$$P^\bA(\z)= \det(\z_0I+\av A{\bs\z}),\quad \z=\z_0+\bs\z\in\R^{n+1}$$ associated with a hyperbolic
$n$-tuple $\bA$ of matrices. Most phenomena are already exhibited in the classes of 
simultaneously triangularisable matrices
with real spectra, hermitian matrices and their direct sums.

A {\it localisation\index{localisation}} $P^{\vec A}_{\xi}$\glossary{$P^{\vec A}_{\xi}$}
 of $ P^{\vec A} $ at $ \xi\in\R^{n+1} $, 
is the lowest nonzero term of the polynomial
$$t \mapsto P^{\vec A}( \xi + t\zeta ) = t^{\mu_{\xi}} P^{\vec A}_{\xi}(\zeta) + {\mathcal 
O}(t^{\mu_{\xi}+1})\ , 
\quad  \mu_{\xi} = \deg\,P^{\vec A}_{\xi}\ .$$
Let $ \vec A $  and $ \xi \in \R^{n+1} $ be fixed. 
Consider
the localisation  $ P^{\vec A}_{\xi} $ of $ P^{\vec A} $ at $ \xi $. 
The {\it local hyperbolicity cone\index{local hyperbolicity cone}} 
and
the {\it local propagation set\index{local propagation set}} of $ P^{\vec A} $ at $ \xi $ 
are defined by setting, respectively,
\begin{equation*}
\Gamma_{\xi}({\vec A}) := \Gamma(P^{\vec A}_{\xi}),\qquad 
K_{\xi}({\vec A}) := 
\{\bx\in\R^n:[e_0+\bx]\in K(P^{\vec A}_{\xi})\ \} \ .
\end{equation*}\glossary{$\Gamma_{\xi}({\vec A})$}\glossary{$K_{\xi}({\vec A})$}
Here the polynomial $P^{\vec A}$ has been replaced by
$P^{\vec A}_\xi$ in the definitions (\ref{eqn:4.4})
and (\ref{eqn:4.61}). A similar notation is used for
the real lineality\index{lineality|)} $\Lambda(P^{\vec A}_{\xi})$
of the polynomial $P^{\vec A}_{\xi}$.

Clearly, $ \Gamma_{\xi}({\vec A}) \supseteq \Gamma(\vec A) $ and, 
hence, 
$ K_{\xi}({\vec A}) \subseteq K( \vec A ) $ for all $ \xi 
\in \R^{n+1} $.
More precisely, the mapping $ (\xi, {\vec A}) \mapsto \Gamma_{\xi}({\vec A}) $ 
(and $ (\xi, {\vec A}) 
\mapsto K_{\xi}({\vec A}) $) is {\it inner} (resp., {\it outer})
{\it continuous} in the sense that $ \Gamma_{\xi}({\vec A})\, \cap \, 
\Gamma_{\tilde\xi}(\tilde {\vec A}) $
(resp., $ K_{\xi}({\vec A})\, \cup  \, K_{\tilde\xi}(\tilde {\vec A}) 
$) is close to 
$ \Gamma_{\xi}({\vec A}) $ (resp., $ K_{\xi}({\vec A}) $) when
 $ (\tilde\xi, \tilde {\vec A}) $ is close to $ (\xi, {\vec A}) $ with
$ \xi, \tilde\xi \in \R^{n+1} $ and 
$ {\vec A}, \tilde{\vec A} $ hyperbolic.

\begin{xmp}\label{xmp:tan} Suppose that $\bA$ consists of two hermitian matrices. Then $\xi \in D_\R(\vec A)$ is a simple point
if and only if $P^{\vec A}_\xi(\z) = \langle b,\z\rangle$ where
$b_0\ne 0$ and $b$ is the tangent vector at $\xi$, that is 
$$[b] =\left [\frac{\pd P^{\vec A}}{\pd \xi_0}(\xi): \frac{\pd P^{\vec A}}{\pd \xi_1}(\xi):\frac{\pd P^{\vec A}}{\pd \xi_2}(\xi)\right] .$$
Then $K_\xi({\vec A})= \{\bx:[e_0+\bx]=[b]\}$ and 
$$C(\vec A) = \overline{\bigcup_{\mu_\xi(\vec A) = 1}K_\xi({\vec A})}.$$
\cite[Theorem 14.20]{ABG2},  shows that ss$(\cW_{\vec A}) = C(\vec A)$.

If $\xi \in D_\R(\vec A)$ and $\deg P^{\vec A}_\xi=2$ (the multiplicity of tangent vectors to $D_\R(\vec A)$ at $\xi$) and
$$P^{\vec A}_\xi(\z) =a(\z_0 +\langle b_1,\bs\z\rangle)(\z_0 +\langle b_2,\bs\z\rangle)$$
for some $a \in\R$, $b_1,b_2\in\R^2$ , then $b_1,b_2\in C(\bA)$ and
$$\G(P^{\vec A}_\xi) = \{\z_0 +\langle b_1,\bs\z\rangle >0,\ \z_0 +\langle b_2,\bs\z\rangle >0\ \},\ 
K_\xi({\vec A}) = \co\{b_1,b_2\}.$$
Hence $K_\xi({\vec A})$ is the line segment joining $b_1$ and $b_2$ lying on the ``double tangent" corresponding to
$\xi\in \R^3$.
\end{xmp}

At this stage, we need to take into account that 
the homogeneous polynomial $ P^{\vec A} $
may not depend on all variables in $\C^{n+1}$.
For example, one of the matrices $A_j$ could be the zero matrix.

The {\it real lineality\index{lineality|(}}\/ $ \Lambda( \vec A ) $ 
\glossary{$\Lambda( \vec A )$} of $ {\vec A}$, is the maximal linear subspace of $ \R^{n+1} $ 
such that the restriction of $ P^{\vec A} $ 
the quotient $ \R^{n+1}/ \Lambda( \vec A ) $ is again
a  polynomial.  Then $ \Lambda( \vec A ) $
coincides with the edge of the hyperbolicity cone\index{hyperbolicity cone} $ \Gamma(\vec A) $, 
so that
 $ \Gamma + \Lambda = \Gamma $, and $ K( \vec A ) $ spans 
the intersection of its orthogonal 
complement
$ \Lambda^{\perp}( \vec A ) $ in $ \R^{n+1} $ with
the plane $x_0=1$.

The system $\vec A$ is called {\it complete\index{complete set of matrices}}
if  $ {\vec A} $ has a trivial lineality. In this case, $ P^{\vec A}_{\xi}(\zeta) 
\equiv P^{\vec A}(\zeta) $ implies
$ \xi = 0 $, the cone $ \Gamma(\vec A) $ is proper (peaked) in the sense 
that $ \overline{\Gamma(\vec A)} $ 
does not contain any straight lines, and then $ K( \vec A ) $ has a 
non-empty interior
$ K^{\circ}( \vec A ) $ in $\R^n$.

The {\it wave front surface\index{wave front surface}} $ W({\vec A}) $\glossary{$W({\vec A})$} of the system
$ {\vec A} $ of matrices
is generated by the union of local propagation cones:
\begin{equation}\label{eqn:4.62}
W({\vec A}) := \bigcup\limits_{0 \not=\xi \in \R^{n+1}}^{}\, 
K_{\xi}({\vec A})\ .
\end{equation}
\begin{thm}[Joswig and Straub \cite{JS}] Let $\bA$ be two $(N\times N)$ hermitian matrices. The set of
critical points of the numerical range map $n_{\bA}:S(\C^N) \to \R^2$ is $n_\bA^{-1}(W({\bA}))$.
\end{thm}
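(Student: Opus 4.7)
The approach is to combine a Lagrange multiplier characterisation of the critical points of $n_{\vec A}$ with a localisation analysis of the determinantal polynomial $P^{\vec A}$. First I would compute the differential of $n_{\vec A}$: on the tangent space $T_hS(\C^N)=\{v\in\C^N:\operatorname{Re}\langle v,h\rangle=0\}$ it has the form $dn_{\vec A}(h)[v]=2(\operatorname{Re}\langle A_1v,h\rangle,\operatorname{Re}\langle A_2v,h\rangle)$. A Lagrange multiplier argument then shows that $h$ is a critical point if and only if there exist $(c_1,c_2)\in\R^2\setminus\{0\}$ and $\lambda\in\R$ with $(c_1A_1+c_2A_2)h=\lambda h$. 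Indeed, the rank of $dn_{\vec A}(h)$ drops precisely when the real-linear functional $v\mapsto\operatorname{Re}\langle v,Bh\rangle$ (with $B=c_1A_1+c_2A_2$) is proportional on $\C^N\cong\R^{2N}$ to $v\mapsto\operatorname{Re}\langle v,h\rangle$, and by hermiticity of $B$ this forces $Bh=\lambda h$ for some $\lambda\in\R$.

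For the inclusion $n_{\vec A}(\text{critical points})\subseteq W(\vec A)$, given $Bh=\lambda h$ I set $\xi=(-\lambda,c_1,c_2)\in\R^3\setminus\{0\}$, so that $P^{\vec A}(\xi)=\det(B-\lambda I)=0$ and $\xi\in\Xi(\vec A)$. Decomposing $\C^N=V\oplus V^\perp$ with $V=\ker(B-\lambda I)$ (both subspaces are $B$-invariant by hermiticity) and writing $M(\zeta)=\zeta_0I+\zeta_1A_1+\zeta_2A_2$, a block-determinant expansion of $P^{\vec A}(\xi+t\zeta)=\det((B-\lambda I)+tM(\zeta))$ yields
\[
P^{\vec A}(\xi+t\zeta)=t^{m}\det\bigl((B-\lambda I)|_{V^\perp}\bigr)\,P^{\vec A|_{V}}(\zeta)+O(t^{m+1}),
\]
with $m=\dim V$ and $\vec A|_{V}=(P_VA_1P_V,P_VA_2P_V)$. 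The scalar factor is nonzero, so the localisation $P^{\vec A}_\xi$ coincides up to a nonzero multiple with the determinantal polynomial of the restricted hermitian pair $\vec A|_{V}$. Hence $\Gamma(P^{\vec A}_\xi)=\Gamma(\vec A|_{V})$ and $K_\xi(\vec A)=K(\vec A|_{V})$, which by Kippenhahn's theorem (Section 2) applied to $\vec A|_V$ equals the joint numerical range $N_{\vec A|_V}$. Since $h\in V$, we have $n_{\vec A}(h)=n_{\vec A|_V}(h)\in N_{\vec A|_V}=K_\xi(\vec A)\subseteq W(\vec A)$.

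For the reverse inclusion, suppose $\vec x=n_{\vec A}(h)\in W(\vec A)$, so $\vec x\in K_\xi(\vec A)$ for some nonzero $\xi=(\xi_0,c_1,c_2)$. Setting $\lambda=-\xi_0$ and $B=c_1A_1+c_2A_2$, the block analysis of the previous step shows that $K_\xi(\vec A)$ lies in the affine line $\{y\in\R^2:c_1y_1+c_2y_2=\lambda\}$, whence $\langle Bh,h\rangle=\lambda$. The remaining task is to upgrade this scalar identity to the stronger relation $h\in V=\ker(B-\lambda I)$: once this holds, $Bh=\lambda h$ exhibits $h$ as an eigenvector of a nontrivial real linear combination of $A_1,A_2$, hence a critical point by the characterisation above. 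When $\lambda$ is an extreme eigenvalue of $B$ (the generic case, corresponding to a simple point of $\Xi(\vec A)$) the upgrade is automatic, since $h\mapsto\langle Bh,h\rangle$ attains the value $\lambda$ only on $V$.

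The main obstacle is the singular case where $\xi$ is a singular point of $\Xi(\vec A)$ and $\lambda$ is an intermediate eigenvalue of $B$: here the scalar identity $\langle Bh,h\rangle=\lambda$ does not by itself confine $h$ to the eigenspace $V$. To close this gap one must exploit the full two-dimensional information encoded in $\vec x\in K_\xi(\vec A)$: decomposing $h$ spectrally along the $B$-eigenspaces and controlling the cross-terms that $A_1,A_2$ contribute to $n_{\vec A}(h)$ (since $A_1,A_2$ need not commute with the spectral projections of $B$), one argues via a support-hyperplane/extremality property of $K_\xi(\vec A)=N_{\vec A|_V}$ (or else by producing a second real linear combination $c_1'A_1+c_2'A_2$ of which $h$ is an eigenvector) that $h$ is critical. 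This synthesis of the singular structure of $\Xi(\vec A)$ with the spectral decomposition of $h$ is the most delicate part of the argument and is where the local propagation data $K_\xi(\vec A)$ must be used to its full strength.
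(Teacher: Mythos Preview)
The paper does not supply a proof of this theorem; it is stated with attribution to Joswig and Straub \cite{JS} and immediately followed by a consequence (real analyticity of the density of $\nu_{\vec A}$ off $W(\vec A)$). There is therefore no argument in the paper to compare your attempt against.

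On the substance of your attempt: your Lagrange multiplier characterisation of the critical set is correct, and your block-determinant computation showing that the localisation $P^{\vec A}_\xi$ at $\xi=(-\lambda,c_1,c_2)$ coincides, up to a nonzero scalar, with $P^{\vec A|_V}$ for $V=\ker(c_1A_1+c_2A_2-\lambda I)$ is sound. This yields $K_\xi(\vec A)=N_{\vec A|_V}$, and since every point of $N_{\vec A|_V}$ is $n_{\vec A}(v)$ for some $v\in V$ (necessarily critical), your forward direction actually proves the equality $n_{\vec A}(\{\text{critical points}\})=W(\vec A)$ of critical \emph{values} with the wave front surface.

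The gap you flag in the reverse inclusion is not merely delicate; it cannot be closed, because the statement as literally written---that the critical \emph{set} equals the full preimage $n_{\vec A}^{-1}(W(\vec A))$---is false. Take
\[
A_1=\begin{pmatrix}2&0&0\\0&0&0\\0&0&-2\end{pmatrix},\qquad
A_2=\begin{pmatrix}0&0&1\\0&0&0\\1&0&0\end{pmatrix}.
\]
Here $e_2$ is a joint eigenvector with joint eigenvalue $(0,0)$, so $(0,0)\in K_{(0,1,0)}(\vec A)\subset W(\vec A)$. The unit vector $h=(1,0,i)/\sqrt{2}$ satisfies $n_{\vec A}(h)=(0,0)$, yet $(c_1A_1+c_2A_2)h=\lambda h$ forces $2c_1+ic_2=\lambda\in\R$ and $c_2-2ic_1=i\lambda$, hence $c_1=c_2=0$. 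Thus $h\in n_{\vec A}^{-1}(W(\vec A))$ is a regular point. The theorem you should be aiming for (and which your forward argument already delivers) is the identification of $W(\vec A)$ with the set of critical values; the paper's phrasing in terms of the preimage appears to be a slip, and your ``main obstacle'' is an artefact of that misstatement rather than a missing idea.
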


By the implicit function theorem, the numerical range distribution $\nu_\bA$ has a real analytic
density with respect to Lebesgue measure outside the wave front set $W({\bA})$.
Although the distribution $\cW_\bA$ consists of a differential operator of order $(N-1)$ acting on
the numerical range distribution $\nu_\bA$ as may be seen from Nelson's representation (\ref{eq:nel}),
the equality ss$(\cW_{\vec A}) = C(\vec A)$ proved in \cite[Theorem 14.20]{ABG2},  shows that $\cW_{\vec A}$
does not see the double tangents mentioned in Example \ref{xmp:tan} in the hermitian case---a fact already apparent from the simplest of examples.

For an ordered set $a= (a_1,\dots,a_N)$ of real numbers, $\diag a$ denotes the $(N\times N)$ diagonal matrix with
entries $a_1,\dots,a_N$ down the diagonal.

\begin{xmp}\label{xmp:wf}
\begin{enumerate}
\item Let $A_1=\diag(1,0)$, $A_2=\diag(0,1)$ and $\bA=(A_1,A_2)$. Then
$$\cW_{\bA} =\d_{(1,0)}P_1 + \d_{(0,1)}P_2,\quad 
P_1:x\mapsto \left(\begin{matrix} x_1\\0\end{matrix}\right),\ 
P_2:x\mapsto \left(\begin{matrix} 0\\x_2\end{matrix}\right),\ x\in\R^2,$$
but $W(\bA) = K_{(1,1)}(\bA) =\co\left\{ \left(\begin{matrix} 1\\0\end{matrix}\right),
 \left(\begin{matrix} 0\\1\end{matrix}\right)\right\}$.
\item Let $A_1 = \left(\begin{matrix}0&1\\0&0\end{matrix}\right)$
and $A_2=\left(\begin{matrix}0&1\\0&1\end{matrix}\right)$. Then as in \cite[Example 5.31]{J}, we have
$$\cW_{\bA} 
=\left(\begin{matrix}0&
\delta_0'\otimes\chi_{[0,1]}\\
0&\delta_0\otimes\delta_1\end{matrix}\right) 
+ \left(\begin{matrix}0&
\delta_0\otimes(\delta_0-\delta_1)\\
0&\delta_0\otimes\delta_1\end{matrix}\right)$$
so $\supp(\cW_\bA) = \{0\}\times[0,1] = K(\bA)= W(\bA)$. The polynomial $P^\bA$ is not complete
\cite[Example 5.36]{J}. The spectral projections associated with $\av A\z$ for $\z\in\C^2$
have poles on the unit circle centred at zero---a phenomenon forbidden by Rellich's lemma in the
hermitian case \cite{JS}.

\item Not so obvious is that the operator valued distribution $\cW_\bA$ vanishes across the double tangent of the Kippenhahn curve $C(\bA)$
associated with
$$\bA = \left(\left(\begin{matrix}1&0&0\\0&-1&0\\0&0&-1\end{matrix}\right),\left(\begin{matrix}
0&0&1\\0&0&1\\1&1&0
\end{matrix}\right)\right).$$
The curve $C(\bA)$ is the third type in Kippenhahn's classification \cite[Theorem 26]{Kip}  of numerical range of 
$(3\times 3)$ matrices, see \cite[Figure X]{JS}. The argument goes as follows.
The couple $\bA=A_1+iA_2$ has no joint invariant subspaces otherwise $\supp(\cW_\bA)$ would be an
elliptical region or an elliptical region together with an outside  point---the joint eigenvalue, or just finitely many points corresponding to the case when $A_1+iA_2$ is a normal matrix, which it is not. 

The elliptical
region for two $(3\times 3)$ matrices $\vec B$ derives from a two dimensional joint invariant subspace $X$,
 should one exist, on which $\vec B$ is represented by
 $(2\times 2)$ hermitian matrices. Denote the restriction of $\vec B$ to the two dimensional joint invariant subspace $X$ by $\vec B_X$.
 The Pauli matrices $\bs\s$ and the identity $I_2$ form a linear basis of the  $(2\times 2)$ hermitian matrices, so 
 there exists an affine transformation $L:x\mapsto Tx+a$, $x\in\R^3$ with rank two  linear $T$  and $a\in\R^2$
 such that $\vec B_X = T\bs\s+aI_2$. By Theorem \ref{thm:monp} (i), $\cW_{\bB_X}$ is an affine
 transformation of $\cW_{\bs\s}$ and $\g(\vec B_X) = LS(\R^3)$ is an elliptical region in $\R^2$.
 However, $C(\bA)$ is a cardioid so the set $(N_\bA)^\circ\setminus\supp(\cW_\bA)$ is a nontrivial lacuna of $\cW_\bA$.

 To check the double tangent, observe that
 $$\z_0I +\langle\bs A,\bs\z\rangle = \begin{pmatrix}\z_0+\z_1&0&\z_2\\0&\z_0-\z_1&
 \z_2\\\z_2&\z_2&\z_0-\z_1\end{pmatrix}$$
 and
 \begin{eqnarray*}
P^\bA(\z) &=& (\z_0+\z_1)((\z_0-\z_1)^2-\z_2^2)-(\z_0-\z_1)\z_2^2\\
 &=& (\z_0^2-\z_1^2)(\z_0-\z_1)- 2\z_0\z_2^2.
\end{eqnarray*}
Then $P^\bA((1,1,0))=0$ and $P_{(1,1,0)}^\bA(\z) = 2((\z_0-\z_1)^2-\z_2^2)$, so a line parallel to 
$\{x_1=0\}$ is tangential to $C(\bA)$ at two points satisfying $x_2=\pm (x_1-1)$ in $\R^2$.
\end{enumerate}
\end{xmp}

We finish Part I of this paper with an explicit calculation of $E_\bA=\cF^{-1}\left(\frac{1}{P^\bA(\xi-i0)}\right)$
for two $(3\times 3)$ hermitian matrices $\bA$ with a joint eigenvalue. Here $E_\bA$ has a Petrovsky lacuna on which
$\cW_\bA$ vanishes. In this example, it is easy to write down the Weyl functional calculus:
$$\cW_{\bs A}=\delta_aP_a\oplus\cW_{(\s_1,\s_2)}P$$
for selfadjoint projections $P_a$, $P$ with $P_a+P=I$ on $\C^3$ and as mentioned above $\cW_{(\s_1,\s_2)}$
is the affine image of $\cW_{\bs\s}$ so $\supp(\cW_{\bA})$ is the union of $\{a\}$ and the closed unit disk centred at zero. The numerical
range $N_\bA$ is the convex hull of $\supp(\cW_{\bA})$. On the other hand calculating the fundamental solution $E_\bA$ takes more effort.

\begin{xmp} \label{Pauli2} Let $\bA= (a_1\oplus \s_1, a_2\oplus \s_2)$ for the Pauli matrices (\ref{eq:Paulim})
and $a\in\R^2$. Then
$$P^\bA(\xi) =\det(\xi_0I+\langle \bs A,\xi\rangle) = (\xi_0 + \langle a,\bs\xi\rangle)(\xi_0^2- |\bs\xi|^2),$$ 
The Kippenhahn curve $C(\bs A)$ is the union of the singleton $\{a\}$ and the unit circle centred at $0$
and $N_{\bs A}$ is the convex hull of $C(\bs A)$.

Then by \cite[equation (4.1)]{Ber} we have
\begin{align*}
\cF^{-1}\left(\frac{1}{P(\xi-i0)}\right) &= \frac{i}{2\pi^\frac12\G(\frac12)}H(x_0+\langle a,\vec x\rangle)\delta(\bx -x_0a)*H(x_0)(x_0^2-|\bs x|^2)_+^{-\frac12}\\
&= \frac{i}{2\pi^\frac12\G(\frac12)}\int_0^\infty H(x_0-y_0)(y_0^2-|\bs x-(x_0-y_0)a|^2)_+^{-\frac12}\,dy_0\\
&= \frac{i}{2\pi^\frac12G(\frac12)}\int_0^{x_0} (y_0^2-|\bs x-(x_0-y_0)a|^2)_+^{-\frac12}\,dy_0,
\end{align*}
where $H$ is the Heaviside function and $\delta$ is the unit point mass at zero, both interpreted as distributions.
The notation means that we are integrating the function 
$$y_0\longmapsto (y_0^2-|\bs x-(x_0-y_0)a|^2)_+^{-\frac12} $$
with values in the space of  distributions
over the interval $[0,x_0]$.

Suppose $x_0=1$,  $1 < |x| < |a|$. Then $\langle a,\bx-a\rangle < 0$ and $|\langle a,\bx-a\rangle| > \sqrt{|a|^2-1}|x-a| $
because the angle $\theta$ between $a$ and $x-a$ satisfies $\cos\theta >  \sqrt{|a|^2-1}/|a|$.
$$y_0^2-|\bs x-(1-y_0)a|^2 = -(|\bx-a|^2 +2\langle a,\bx-a\rangle y_0+(|a|^2-1)y_0^2) := p(y_0).$$
$$-(|a|^2-1)\big((y_0+\langle a,\bx-a\rangle/(|a|^2-1))^2 -(\langle a,\bx-a\rangle^2/(|a|^2-1)^2-|\bx-a|^2/(|a|^2-1))\big)$$
If $|\langle a,\bx-a\rangle| < \sqrt{|a|^2-1}|x-a| $, then the expression is negative and the integral is zero.
Suppose that $|\langle a,\bx-a\rangle| > \sqrt{|a|^2-1}|x-a| $.
$$y_0=\left(-\langle a,\bx-a\rangle\pm \sqrt{\langle a,\bx-a\rangle^2-(|a|^2-1)|\bx-a|^2}\right)\vee0/(|a|^2-1)$$
$$\langle a,\bx-a\rangle^2-(|a|^2-1)|\bx-a|^2 = (|a|^2\cos\theta - (|a|^2-1))|\bx-a|^2 > 0.$$

If $|x| > 1$, then the limits of integration are
$$(y_0+ \langle a,\bx-a\rangle/(|a|^2-1)) =\pm \sqrt{\langle a,\bx-a\rangle^2-(|a|^2-1)|\bx-a|^2}/(|a|^2-1).$$ 

If $|x| < 1$, then $-\langle a,\bx-a\rangle + \sqrt{\langle a,\bx-a\rangle^2-(|a|^2-1)|\bx-a|^2} > (|a|^2-1)$ and
$$0<-\langle a,\bx-a\rangle - \sqrt{\langle a,\bx-a\rangle^2-(|a|^2-1)|\bx-a|^2} < (|a|^2-1)$$
because the polynomial $p$ is equal to $1-|x|^2$ at $y_0=1$, so it has a zero to the right of 1.

If $|x| > 1$ and $|\langle a,\bx-a\rangle| > |a|^2-1$, that is, on the other side of the unit circle to $a$, then
the polynomial $p$ has zeros on the positive axis because it has the values $-|x-a|^2$ at $y_0=0$, 
$1-|x|^2$ at $y_0=1$ and the maximum value is at 
$$y_0 = |\langle a,\bx-a\rangle|/(|a|^2-1) > 1.$$
Hence the least zero of $p$ satisfies the inequality
$$1 < \frac{-\langle a,\bx-a\rangle- \sqrt{\langle a,\bx-a\rangle^2-(|a|^2-1)|\bx-a|^2}}{|a|^2-1} < 
\frac{|\langle a,\bx-a\rangle|}{|a|^2-1} $$
and $p(y_0)=y_0^2-|\bs x-(1-y_0)a|^2 < 0$ for all $0\le y_0 \le 1$. The polynomial $p$ is increasing on $[0,1]$
where $(y_0^2-|\bs x-(x_0-y_0)a|^2)_+$ vanishes.

Let $u(x) = \left(\sqrt{\langle a,\bx-a\rangle^2-(|a|^2-1)|\bx-a|^2}\right)/(|a|^2-1)$.
$$\int\frac1{\sqrt{b^2-x^2}}\,dx = \sin^{-1}(x/b)$$
Then for $|x|> 1$,  $\cos\theta >  \sqrt{|a|^2-1}/|a|$ and  $\langle a,\bx-a\rangle < 0$, the integral 
$$\int_0^{1} (y_0^2-|\bs x-(x_0-y_0)a|^2)_+^{-\frac12}\,dy_0$$
 equals
$$\left [\sin^{-1}\left(t(|a|^2-1)/\sqrt{\langle a,\bx-a\rangle^2-(|a|^2-1)|\bx-a|^2}\right)\right]_{t=-u(x)}^{t=u(x)}=\pi.$$
For $|x| \le 1$, $\int_0^{1} (y_0^2-|\bs x-(x_0-y_0)a|^2)_+^{-\frac12}\,dy_0$ equals
$$\left [\sin^{-1}\left(t(|a|^2-1)/\sqrt{\langle a,\bx-a\rangle^2-(|a|^2-1)|\bx-a|^2}\right)\right]_{t=-u(x)}^{t=1+\langle a,\bx-a\rangle/(|a|^2-1)}$$
$$=\pi/2+\sin^{-1}\left((\langle a,\bx\rangle-1)/\sqrt{\langle a,\bx-a\rangle^2-(|a|^2-1)|\bx-a|^2}\right).$$
Also $\cW_{\bs A}=\delta_aP_a\oplus\cW_{(\s_1,\s_2)}P_2$

%

To calculate the wave front set, we note that
$$(\xi_0+t\z_0)^2- |\bs\xi+t\bs\z|^2 =\xi_0^2- |\bs\xi|^2 +2t(\xi_0\z_0-\langle \bs\xi,\bs\z\rangle)+t^2(\z_0^2-|\bs\z|^2).$$
For $\xi_0^2- |\bs\xi|^2=0$, $\eta = \pm|\bs\xi|e_0-\bs\xi$,  we have $\G_\eta = \{\pm\langle \eta,\z \rangle > 0\}$,
$K_\eta = \{\pm t\eta | t > 0\}$. $\cup_{\eta \ne 0} K_\eta$ is a circular cone in $\R^3$ and
\begin{align*}
P(\xi) &=(\xi_0 + \langle a,\bs\xi\rangle)(\xi_0^2- |\bs\xi|^2)\\
\G(P) &=  \{\xi_0 + \langle a,\bs\xi\rangle >0,\ \xi_0- |\bs\xi|>0,\ \xi_0+ |\bs\xi|>0\}\\
K(P) &=  \text{co}\{t(e_0+a),t(e_0+\xi):|\xi|=1,\xi\in\R^n,\ t>0\ \}
\end{align*}
$$P(\xi+t\z) = (\xi_0^2- |\bs\xi|^2)(\xi_0 + \langle a,\bs\xi\rangle) +2t(\frac12(\xi_0^2- |\bs\xi|^2)(\z_0 + \langle a,\bs\z\rangle)+(\xi_0\z_0-\langle \bs\xi,\bs\z\rangle)(\xi_0 + \langle a,\bs\xi\rangle))$$
$$+t^2((\z_0^2-|\bs\z|^2)(\xi_0 + \langle a,\bs\xi\rangle)+2(\xi_0\z_0-\langle \bs\xi,\bs\z\rangle)(\z_0 + \langle a,\bs\z\rangle)) +t^3(\z_0^2-|\bs\z|^2)(\z_0 + \langle a,\bs\z\rangle)$$
$$\xi_0 + \langle a,\bs\xi\rangle=0,\ \eta = e_0+a.$$
$$|a| > 1,\ \xi_0^2 = |\bs\xi|^2,\ \langle a,\bs\xi\rangle =  -\xi_0.$$
$$P_\xi(\z)=2(\xi_0\z_0-\langle \bs\xi,\bs\z\rangle)(\z_0 + \langle a,\bs\z\rangle)$$
$$\Xi_{\xi'}(\z) = \{\z:|\bs\xi'|\z_0-\langle \bs\xi',\bs\z\rangle=0\}\cup\{\z_0 + \langle a,\bs\z\rangle=0\},\ \xi_0=|\bs\xi'|$$
Because $-\bs\xi' = \bs\xi$ for one of the two solutions of $\langle a,\bs\xi\rangle =  |\bs\xi|$, this set is equal to
$$\Xi_\xi(\z) = \{\z:|\bs\xi|\z_0+\langle \bs\xi,\bs\z\rangle=0\}\cup\{\z_0 + \langle a,\bs\z\rangle=0\},\ \xi_0=-|\bs\xi|\}$$
Now choose $\ |\bs \xi|=1$. Then 
\begin{align*}
\G_\xi &=\{\z : \z_0 + \langle \bs\xi,\bs\z\rangle > 0,\ \z_0 + \langle a,\bs\z\rangle>0 \},\\
K_\xi &=\text{co}\{\bs \xi ,a\},\ \xi_0=-1,
\end{align*} 
for each of the two solutions of $\langle a,\bs\xi\rangle =  1.$
The wave front set
$W(\bs A)$ is the unit circle plus $\{a\}$ as well as segments tangential to the circle meeting at $\{a\}$.
\end{xmp}


\bigskip


\bibliographystyle{plain}

\end{document}